 \def\draw #1 by #2 (#3){
  \vbox to #2{
    \hrule width #1 height 0pt depth 0pt
    \vfill
    \special{picture #3} % this is the low-level interface
    }
  }
 \def\scaleddraw #1 by #2 (#3 scaled #4){{
  \dimen0=#1 \dimen1=#2
  \divide\dimen0 by 1000 \multiply\dimen0 by #4
  \divide\dimen1 by 1000 \multiply\dimen1 by #4
  \draw \dimen0 by \dimen1 (#3 scaled #4)}
  }
\newtheorem{theorem}{Theorem}[section]
\newtheorem{problem}[theorem]{Problem}
\newtheorem{lemma}[theorem]{Lemma}
\newtheorem{corollary}[theorem]{Corollary}
\newtheorem{nt}{Note}
\theoremstyle{definition}%
\newtheorem{defin}[theorem]{Definition}
\newtheorem{example}[theorem]{Example}
 \newtheorem{proposition}[theorem]{Proposition}
 \newtheorem{rule-def}[theorem]{Rule}
\journal{Discrete Mathematics}
\begin{document}

\begin{frontmatter}

\newcommand{\la}{\lambda}
 \newcommand{\si}{\sigma}
 \newcommand{\ol}{1-\lambda}
 \newcommand{\be}{\begin{equation}}
 \newcommand{\ee}{\end{equation}}
 \newcommand{\bea}{\begin{eqnarray}}
 \newcommand{\eea}{\end{eqnarray}}

\title{Structural properties of Toeplitz graphs}

\author[label1]{Seyed Ahmad Mojallal}
\ead{ahmad\_mojalal@yahoo.com}
\author[label4]{Ji-Hwan Jung}
\ead{jihwanjung@snu.ac.kr}
\author[label2]{Gi-Sang Cheon}
\ead{gscheon@skku.edu}
\author[label2,label3]{Suh-Ryung Kim}
\ead{srkim@snu.ac.kr}
\author[label2]{Bumtle Kang\corref{cor1}}
\ead{lokbt1@skku.edu}
\cortext[cor1]{Corresponding author}

\address[label1]{Department of Mathematics and Statistics, University of Regina, Regina, Saskatchewan, S4S0A2, Canada}
\address[label2]{Applied Algebra and Optimization Research Center, Department of Mathematics, Sungkyunkwan University, Suwon $16419$, Republic of Korea}
\address[label3]{Department of Mathematics Education, Seoul National University, Seoul $08826$, Republic of Korea}
\address[label4]{Center for Educational Research, Seoul National University, Seoul $08826$, Republic of Korea}

\begin{abstract}
In this paper, we study structural properties of Toeplitz graphs.  We characterize $K_q$-free Toeplitz graphs for an integer $q \ge 3$ and give equivalent conditions for a Toeplitz graph $G_n\langle t_1, t_2,\ldots, t_k\rangle$ with  $t_1<\cdots<t_k$ and $n \ge t_{k-1}+t_{k}$ being chordal and equivalent conditions for a Toeplitz graph $G_n\langle t_1,t_2 \rangle$ being perfect. Then we compute the edge clique cover number and the vertex clique cover number of a chordal Toeplitz graph. Finally, we characterize the degree sequence $(d_1,d_2,\ldots,d_n)$ of a Toeplitz graph with $n$ vertices and show that a Toeplitz graph is a regular graph if and only if it is a circulant graph.
\end{abstract}

\begin{keyword}
%% keywords here, in the form: keyword \sep keyword
 Toeplitz graphs \sep Chordal Toeplitz graphs \sep Perfect Toeplitz graphs \sep Regular Toeplitz graphs \sep Circulant graphs
%% PACS codes here, in the form: \PACS code \sep code
\MSC 05C70 \sep 05C69
%% MSC codes here, in the form: \MSC code \sep code
%% or \MSC[2008] code \sep code (2000 is the default)
\end{keyword}

\end{frontmatter}

%% \linenumbers

%% main text

 \section{Introduction}

 An $n\times n$ matrix $T=(t_{ij})_{1\le i,j\le n}$ is called a {\it Toeplitz matrix} if $t_{i,j}=t_{i+1,j+1}$ for each $\{i,j\} \subset [n-1]$ where $[m]$ denotes the set $\{1,2,\ldots,m\}$ for a positive integer $m$.
 Toeplitz matrices are precisely those matrices that are constant along all diagonals parallel to the main diagonal, and thus a Toeplitz matrix is
 determined by its first row and column. Toeplitz matrices occur in a large variety of areas in pure and applied mathematics. For example,
 they often appear when differential or integral equations are discretized and arise in physical data-processing applications and in the theories of orthogonal polynomials, stationary processes, and moment problems; see Heinig and Rost \cite{HR}. Other references on
 Toeplitz matrices are Gohberg \cite{Goh} and lohvidov \cite{Ioh}.

   A {\it Toeplitz graph} is defined to be a simple, undirected graph whose adjacency matrix is a $(0,1)$-symmetric Toeplitz matrix. Any Toeplitz matrix mentioned in this paper has the main diagonal entries $0$.
   One can see that the first row of a symmetric Toeplitz matrix determines a unique Toeplitz graph.
 In this vein, we denote a Toeplitz graph with $n$ vertices by $G_n\langle t_1, t_2,\ldots, t_k\rangle$
 if the $1$'s in the first row of its adjacency matrix are placed at positions $1+t_1$, $1+t_2$, \ldots, $1+t_k$ with $1\le t_1<t_2<\ldots <t_k<n$. 
  In addition, we label the vertices of $G_n\langle t_1, t_2,\ldots, t_k\rangle$ with $1,\ldots,n$ so that the $i$th row of its adjacency matrix corresponds to the vertex labeled $i$. See Figure~\ref{fig:toe1} for an illustration.

  \begin{figure}
 \begin{minipage}[c]{.5\textwidth}
        \[
 \left(\begin{array}{ccccc}
0 & 1 & 1 & 0 & 1 \\
1 & 0 & 1 & 1 & 0 \\
1 & 1 & 0 & 1 & 1 \\
0 & 1 & 1 & 0 & 1 \\
1 & 0 & 1 & 1 & 0 \end{array}\right)\]
\end{minipage}
\begin{minipage}[c]{.45\textwidth}
\begin{tikzpicture}\node[anchor=south west,inner sep=0] (image) at (0,0)
{\includegraphics[width=0.6\textwidth]{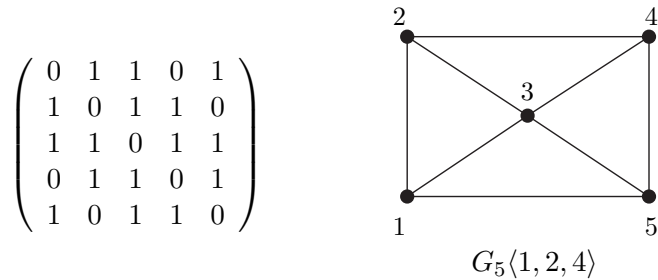}};
 \begin{scope}
 \draw (0,-0.3) node{\small $1$};
\draw (0,2.5) node{\small $2$};
\draw (1.7,1.5) node{\small $3$};
\draw (3.35,2.5) node{\small $4$};
\draw (3.35,-0.3) node{\small $5$};
\draw (1.8,-0.8) node{$G_5\langle 1,2,4 \rangle$};
    \end{scope}
\end{tikzpicture}
\end{minipage}
\caption{A $(0,1)$-symmetric Toeplitz matrix and its Toeplitz graph}\label{fig:toe1}
\end{figure}

 For $V=\mathbb{N}$ and $k<\infty$, {\it infinite Toeplitz graphs} $G_{\infty}\langle t_1, t_2,\ldots, t_k\rangle$ are defined the same way. Both types may be
 studied as special subgraphs of {\it integer distance graphs} \cite{CCH, KK, KM}.

Toeplitz graphs have been introduced by G. Sierksma and first been investigated with respect to hamiltonicity by van Dal et al.
 \cite{DTT} (see also Heuberger \cite{Heu}, Malik and Qureshi \cite{MQ}, Malik and Zamfirescu \cite{MZ} for more recent works). Infinite, bipartite Toeplitz
 graphs have been fully characterized in terms of bases and circuits by Euler et al. \cite{ELZ} (with results on the finite case presented in
 Euler \cite{Eul}). Colouring aspects are especially treated in Heuberger \cite{Heu2}, Kemnitz and Marangio \cite{KM}, Nicoloso and Pietropaoli \cite{NP}.
 Infinite, planar Toeplitz graphs have been fully characterized in Euler \cite{Eul3} providing, in particular, a complete description of the class of 3-colourable such
 graphs. Finite planar Toeplitz graphs are studied in \cite{EZ}.

 A {\it hole} is a chordless cycle of length at least $4$ as an induced subgraph, while an {\it anti-hole} is the complement of a hole. An odd hole (respectively odd anti-hole) is a hole (respectively anti-hole) with an odd number of vertices.
 A {\it chordal graph} is a simple graph without holes.
  A graph $G=(V,E)$ is an {\it interval graph} if it captures the intersection relation for some set of intervals on the real line. Formally, $G$ is an interval graph provided that one can assign to each
 $v\in V$ an interval $I_v$ such that $I_u \cap I_v$ is nonempty precisely when $uv \in  E$. Three independent vertices form an {\em asteroidal triple} in a graph $G$ if, for each two, there exists
 a path containing those two but no neighbor of the third. It is well-known that a graph $G$ is an interval graph if and only if it is chordal and has no asteroidal triple \cite{LB}.

A {\it clique} is a complete subgraph or a subset of vertices of an undirected graph such that every two distinct vertices in the clique are adjacent.
A {\it clique cover} of $G$ is a set of cliques of $G$ such that every vertex is in at least one of them. The {\it clique cover number} is the minimum size of a clique cover, and is denoted by $\theta_v(G)$.
 An {\it edge clique cover} of $G$ is a set of cliques of $G$, which together contain each edge of $G$ at least once. The smallest cardinality of any edge clique cover of $G$ is called the {\it edge clique cover number} of $G$, and is denoted by $\theta_E(G)$. Those numbers exist as the vertex set (resp. the edge set) of $G$ forms a clique cover (resp. an edge clique cover) for $G$.

  The {\it chromatic number} of a graph $G$, denoted by $\chi(G)$, is the smallest number of colors needed to color the vertices of $G$ so that no two adjacent vertices share the same color.

 The clique cover number of $G$ equals the chromatic number of its complement $\overline{G}$, that is,
 \begin{equation}\label{equ3}
 \theta_v(G)=\chi(\overline{G}).
 \end{equation}

 A {\it perfect graph} is a graph $G$ such that for every induced subgraph $H$ of $G$, the clique number equals the chromatic number, i.e.,
 $\omega(H)=\chi(H)$.  A graph for which  $\omega(G)=\chi(G)$ (without any requirement that this condition also hold on induced subgraphs) is called a {\it weakly perfect graph}.
 All perfect graphs are therefore weakly perfect by definition.

 A {\it circulant matrix} $C_n$ is an $n\times n$ Toeplitz matrix in the following form:
 $$C_n=\left[
     \begin{array}{ccccc}
       c_0 & c_{n-1} & \ldots & c_2 & c_1 \\
       c_1 & c_0 & c_{n-1} &  & c_2 \\
       \vdots & c_1 & c_0 & \ddots & \vdots \\
       c_{n-2} &  & \ddots & \ddots & c_{n-1} \\
       c_{n-1} & c_{n-2} & \ldots & c_1 & c_0 \\
     \end{array}
   \right].$$
 A graph is said to be a {\it circulant graph} if it is isomorphic to a Toeplitz graph whose adjacency matrix is a $(0,1)$-symmetric circulant matrix $C_n$ where \begin{equation} \label{eq:circulant}
 c_i=c_{n-i}  \in \{0,1\}
 \end{equation}
 for each  $i \in \{1,\ldots, \left\lfloor n/2\right\rfloor\}$ and  $c_0=0$.  Circulant graphs are well-studied (see~\cite{IB, KLP, Muz, Mei, ZZ} for references). The family of circulant graphs is an important subclass of Toeplitz graphs. Circulant graphs have various applications in the design of interconnection networks in parallel and distributed computing.

\begin{figure}
\begin{center}

\begin{tikzpicture}
    \node[anchor=south west,inner sep=0] (image) at (0,0)
{\includegraphics[height=0.285\textheight]{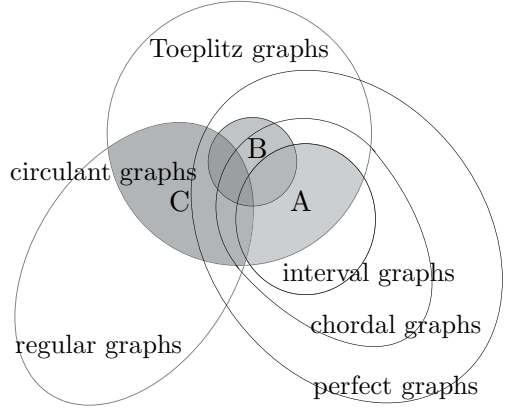}};
\begin{scope}[x={(image.south east)},y={(image.north west)}]
\node[anchor=north] at (0.51,0.925)
    {{\small Toeplitz graphs}};
\node[anchor=north] at (0.25,0.2)
    {{\small regular graphs}};
\node[anchor=north,left] at (0.45,0.57)
    {{\small circulant graphs}};
\node[anchor=north] at (0.8,0.1)
    {{\small perfect graphs}};
\node[anchor=north] at (0.8,0.25)
    {{\small chordal graphs}};
\node[anchor=north] at (0.75,0.375)
    {{\small interval graphs}};
\node[anchor=north] at (0.625,0.55)
    {A};
\node[anchor=north] at (0.544,0.680)
    {B};
\node[anchor=north] at (0.4,0.55)
    {C};
    \end{scope}
\end{tikzpicture}
\caption{Where do Toeplitz graphs stand?} \label{fig:hierachy}
\end{center}
\end{figure}

 The paper is organized as follows. In Section~\ref{sec1}, we characterize $K_q$-free Toeplitz graphs for an integer $q \ge 3$, where $K_q$ denotes the complete graph with $q$ vertices, and then we compute the edge clique cover number and the vertex clique cover number of a Toeplitz graph. In section~\ref{sec2}, we study holes in Toeplitz graphs and give a condition for a Toeplitz graph not having holes, which leads to a characterization of chordal Toeplitz graphs. Then we give equivalent conditions for a Toeplitz graph $G=G_n\langle t_1,t_2 \rangle$ being perfect. In Section~\ref{sec:degree}, we characterize the degree sequence $(d_1,d_2,\ldots,d_n)$ of a Toeplitz graph with $n$ vertices and show that a Toeplitz graph is a regular graph if and only if it is a circulant graph.
  In Section~\ref{sec:open}, we propose open problems.

  Our results are summarized in Figure~\ref{fig:hierachy}. The grey region $A$ represents the set of Toeplitz graphs $G=G_n\langle t_1, t_2,\ldots, t_k\rangle$ with $t_1<\cdots<t_k$ and $n \ge t_{k-1}+t_{k}$ being chordal for a fixed positive integer $n$ (Theorem~\ref{thm:chordequiv}); the region $B$ represents the set of  odd-hole-free Toeplitz graphs $G=G_n\langle t_1,t_2 \rangle$ (Theorem~\ref{theorem:perfectgraph}); the region $C$ represents the set of circulant graphs (Theorem~\ref{thm16}).

 \section{Cliques in Toeplitz graphs}\label{sec1}

  In this section, we give an upper bound for the clique number of Toeplitz graphs and characterize $K_q$-free Toeplitz graphs for an integer $q \ge 3$, and then we compute the edge clique cover number and the vertex clique cover number of a Toeplitz graph  $G_n \langle t,2t,\ldots,kt \rangle$. The following two results immediately follow from the definition of Toeplitz graph.

\begin{proposition}\label{prop:adj} For a positive integer $k$, let $G=G_n\langle t_1,t_2,\ldots,t_k \rangle$. Then for each $i,j \in [n]$, $i$ and $j$ are adjacent if and only if $|i-j| \in \{t_1,\ldots,t_k\}$. \end{proposition}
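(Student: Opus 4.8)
The plan is to verify the claim directly from the definition of a Toeplitz graph as the graph whose adjacency matrix is a $(0,1)$-symmetric Toeplitz matrix with main diagonal $0$, together with the labeling convention that the $i$th row of the adjacency matrix corresponds to vertex $i$. So vertices $i$ and $j$ are adjacent precisely when the $(i,j)$ entry $a_{ij}$ of the adjacency matrix $A$ equals $1$.

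First I would recall what the first row of $A$ looks like for $G_n\langle t_1,\ldots,t_k\rangle$: by definition the $1$'s in row $1$ sit at column positions $1+t_1,\ldots,1+t_k$, and all other entries of row $1$ are $0$; in particular $a_{1,1}=0$. Next, using the Toeplitz property $a_{i,j}=a_{i+1,j+1}$ applied repeatedly, for any $i,j$ with $i\le j$ we get $a_{i,j}=a_{1,\,1+(j-i)}$, which is $1$ exactly when $1+(j-i)\in\{1+t_1,\ldots,1+t_k\}$, i.e. when $j-i\in\{t_1,\ldots,t_k\}$. By symmetry of $A$ (the matrix is a symmetric Toeplitz matrix, so it is also determined by its first row in the sense that $a_{i,j}=a_{j,i}$), the same conclusion holds when $i>j$, with $j-i$ replaced by $i-j$. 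Hence in all cases $a_{i,j}=1$ if and only if $|i-j|\in\{t_1,\ldots,t_k\}$, which is exactly the assertion; note the convention $1\le t_1<\cdots<t_k<n$ guarantees these indices stay in range and that the diagonal case $|i-j|=0$ gives $a_{i,i}=a_{1,1}=0$, consistent with the graph being simple and loopless.

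The one point that deserves a sentence of care — and what I would call the only mild obstacle — is making the reduction $a_{i,j}=a_{1,\,1+(j-i)}$ precise: it is an easy induction on $i$ (or on $\min(i,j)$) using $t_{i,j}=t_{i+1,j+1}$, but one must check that every intermediate pair of indices invoked lies in $[n-1]\times[n-1]$ so the Toeplitz relation legitimately applies, which it does since $1\le i\le j\le n$. After that, everything is just unwinding definitions, so the proof is short.
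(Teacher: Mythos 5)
Your proof is correct; the paper gives no explicit argument for this proposition, stating only that it "immediately follows from the definition of Toeplitz graph," and your careful unwinding via $a_{i,j}=a_{1,\,1+(j-i)}$ plus symmetry is exactly the intended definitional reasoning.
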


\begin{lemma}\label{lem:path} For positive integers $t$ and $k$, let $G=G_n\langle t, 2t,\ldots, kt\rangle$. Then $u$ and $v$ are connected  in $G$ if and only if $v-u$ is a multiple of $t$.
\end{lemma}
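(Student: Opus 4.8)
The plan is to prove the two implications of the biconditional separately, in each case using Proposition~\ref{prop:adj} to translate adjacency into a statement about the vertex labels.

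For the forward direction, suppose $u$ and $v$ lie in the same connected component and pick a walk $u = w_0, w_1, \ldots, w_\ell = v$ joining them. By Proposition~\ref{prop:adj}, for each $i$ we have $|w_i - w_{i-1}| \in \{t, 2t, \ldots, kt\}$, so $w_i - w_{i-1}$ is a multiple of $t$. Telescoping, $v - u = \sum_{i=1}^{\ell}(w_i - w_{i-1})$ is a sum of multiples of $t$, hence itself a multiple of $t$.

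For the backward direction, assume $v - u$ is a multiple of $t$; without loss of generality $u \le v$, and write $v - u = mt$ with $m$ a nonnegative integer. If $m = 0$ then $u = v$ and there is nothing to show. Otherwise consider the sequence $u, u+t, u+2t, \ldots, u+mt = v$. Every term lies between $u$ and $v$ (inclusive), hence in $[n]$ since $1 \le u$ and $v \le n$; and consecutive terms differ by exactly $t = t_1$, so they are adjacent by Proposition~\ref{prop:adj}. This is a path from $u$ to $v$, so $u$ and $v$ are connected.

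The argument is essentially immediate once adjacency is expressed via absolute differences, so there is no substantive obstacle. The only step needing a small check is that each intermediate vertex $u+jt$ really belongs to the vertex set $[n]$, which holds because it is sandwiched between $u$ and $v$. It is worth remarking that the presence of the generator $t_1 = t$ is precisely what makes the ``walk in steps of $t$'' available; the larger generators $2t, \ldots, kt$ play no role in connectivity and only add edges within the same components.
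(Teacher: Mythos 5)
Your proof is correct: the paper states this lemma without proof (declaring it to ``immediately follow from the definition''), and your walk-telescoping argument for the forward direction together with the explicit $t$-step path for the converse is exactly the standard argument being taken for granted. Both directions are handled cleanly, including the check that the intermediate vertices $u+jt$ lie in $[n]$.
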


\begin{proposition}\label{Toeplitz-chordal-2}  For positive integers $t$ and $k$, let $G=G_n\langle t, 2t,\ldots, kt\rangle$. Then $G$ has $t$ components.
In particular, if $H_1, \ldots, H_t$ are the components of $G$, then
  $H_i$ is isomorphic to the graph $G_{\lfloor(n-i)/t\rfloor+1}\langle 1, 2,\ldots,
k\rangle$ and the vertex set of $H_i$ is  $\{s \in [n]\mid s \equiv i \pmod{t} \}$ for each $i \in [t]$.
\end{proposition}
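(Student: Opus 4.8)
The plan is to prove the two assertions in sequence, using Proposition~\ref{prop:adj} and Lemma~\ref{lem:path} throughout. First I would establish that $G=G_n\langle t,2t,\ldots,kt\rangle$ has exactly $t$ components. By Lemma~\ref{lem:path}, two vertices $u,v$ lie in the same component precisely when $v-u\equiv 0\pmod t$, so the relation ``same component'' coincides with the congruence relation modulo $t$ on $[n]$. Since $n\ge t$ is not explicitly assumed, I should note that the residues $1,2,\ldots,t$ each occur among $\{1,\ldots,n\}$ only when $n\ge t$; if $n<t$ the graph is edgeless on $n<t$ vertices and the statement would need the obvious caveat, but presumably the intended reading has $kt<n$ (as in the notation $G_n\langle\cdots\rangle$ requiring $t_k<n$), which forces $t<n$. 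Granting that, the congruence classes modulo $t$ intersected with $[n]$ are exactly $t$ nonempty sets, namely $\{s\in[n]\mid s\equiv i\pmod t\}$ for $i\in[t]$, and each is a single component by Lemma~\ref{lem:path}. This gives the ``$t$ components'' claim and simultaneously identifies the vertex set of $H_i$.

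Next I would count $|V(H_i)|$: the set $\{s\in[n]\mid s\equiv i\pmod t\}$ is $\{i,i+t,i+2t,\ldots\}$ up to $n$, which has $\lfloor (n-i)/t\rfloor+1$ elements. List these vertices in increasing order as $v_0=i,\ v_1=i+t,\ \ldots,\ v_m=i+mt$ with $m=\lfloor(n-i)/t\rfloor$, and define $\varphi:V(H_i)\to[m+1]$ by $\varphi(v_j)=j+1$. I claim $\varphi$ is an isomorphism onto $G_{m+1}\langle 1,2,\ldots,k\rangle$. Indeed, for $v_a,v_b\in V(H_i)$ with $a<b$, Proposition~\ref{prop:adj} gives that $v_a$ and $v_b$ are adjacent in $G$ iff $v_b-v_a=(b-a)t\in\{t,2t,\ldots,kt\}$, i.e.\ iff $b-a\in\{1,\ldots,k\}$; and $\varphi(v_b)-\varphi(v_a)=b-a$, so by Proposition~\ref{prop:adj} applied to $G_{m+1}\langle 1,\ldots,k\rangle$ this holds iff $\varphi(v_a)$ and $\varphi(v_b)$ are adjacent there. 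Since edges in $H_i$ only occur between vertices of $H_i$ (it is a component) and $\varphi$ is a bijection, this completes the proof that $H_i\cong G_{\lfloor(n-i)/t\rfloor+1}\langle 1,2,\ldots,k\rangle$.

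I do not expect a serious obstacle here; the argument is essentially bookkeeping once Lemma~\ref{lem:path} is in hand. The only point requiring a little care is the ``rescaling'' step — verifying that the map dividing gaps by $t$ really does send the adjacency pattern $\{t,2t,\ldots,kt\}$ exactly to $\{1,2,\ldots,k\}$ and nothing else — but this is immediate from Proposition~\ref{prop:adj} because the differences realized within a single residue class are precisely the multiples of $t$. A secondary subtlety is the degenerate range of $n$ relative to $t$ and $k$; I would either fold in the standing assumption $t_k=kt<n$ implicit in the notation, or state the result for $n\ge t$ so that all $t$ residue classes are nonempty, which is what makes ``$G$ has $t$ components'' literally true.
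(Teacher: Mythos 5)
Your proposal is correct and follows essentially the same route as the paper: both identify the components as the residue classes modulo $t$ via Lemma~\ref{lem:path} and then exhibit the rescaling bijection (your $\varphi$ is just the inverse of the paper's map $f(s+1)=i+st$), verifying adjacency preservation through Proposition~\ref{prop:adj}. Your side remark on the degenerate case $n<t$ is resolved exactly as you suspect, since the notation already forces $kt<n$ and hence $t<n$.
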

\begin{proof}
For each $i \in [t]$, we let $V_i=\{i+st \mid s=0,1,\ldots,\left\lfloor(n-i)/t\right\rfloor\}$.
Then $|V_i| = \lfloor(n-i)/t\rfloor+1$ for each  $i \in [t]$.
By Lemma~\ref{lem:path},  $H_i:=G[V_i]$ is a component of $G$ for each  $i \in [t]$.
Fix $i \in [t]$ and let $f$ be a function from $V(G_{\lfloor(n-i)/t\rfloor+1}\langle 1, 2,\ldots,
k\rangle)$ to $V_i$ defined by $f(s+1) = i+st$ for each  $s \in \{0,1,\ldots,\lfloor(n-i)/t\rfloor\}$. It is easy to see that $f$ is a bijection.
Then  $u$ and $v$ are adjacent in $G_{\lfloor(n-i)/t\rfloor+1}\langle 1, 2,\ldots,k\rangle$ if and only if $|u-v| \in \{1,\ldots,k\}$, which is equivalent to $|(i+(u-1)t) - (i+(v-1)t)| \in \{t,\ldots, kt\}$, that is, $f(u)$ and $f(v)$ are adjacent in $H_i$.
\end{proof}

 \begin{lemma}\label{thm11} Let $G=G_n\langle t_1, t_2,\ldots, t_k\rangle$.
 Then there is a maximum clique of $G$ that contains the vertex $1$. \end{lemma}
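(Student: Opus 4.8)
The plan is to take an arbitrary maximum clique $C$ of $G = G_n\langle t_1,\dots,t_k\rangle$ and "shift" it so that its smallest vertex becomes $1$, while preserving adjacency and hence preserving cliquehood. Concretely, if $C = \{v_1 < v_2 < \cdots < v_m\}$ is a maximum clique, I consider the set $C' = \{v_i - (v_1 - 1) : i \in [m]\} = \{1, v_2 - v_1 + 1, \dots, v_m - v_1 + 1\}$. Every element of $C'$ lies in $[n]$ since $1 \le v_i - v_1 + 1 \le v_m \le n$, and for any two indices $i,j$ the difference $(v_i - v_1 + 1) - (v_j - v_1 + 1) = v_i - v_j$ is unchanged. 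By Proposition~\ref{prop:adj}, adjacency in a Toeplitz graph depends only on the absolute value of the difference of the labels, so $C'$ induces a clique in $G$ of the same size $m$ as $C$, and $1 \in C'$. Since $C$ was a maximum clique, $C'$ is a maximum clique containing the vertex $1$.

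**The only thing to check carefully** is that translation by a fixed integer really does preserve the pairwise difference set: for $a, b \in C$ we have $|(a - c) - (b - c)| = |a - b|$ for any constant $c$, so $a \sim b$ iff $a - c \sim b - c$ whenever both shifted labels are valid vertices. Choosing $c = v_1 - 1 \ge 0$ guarantees all shifted labels stay in $\{1, \dots, n\}$ because we only decrease the labels and the smallest one becomes exactly $1$. This is essentially a one-line argument once Proposition~\ref{prop:adj} is invoked; there is no real obstacle, the lemma is a routine observation about the translation-invariance of the Toeplitz adjacency relation restricted to $[n]$.

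**I would present it as follows.** Let $C$ be a maximum clique of $G$ and let $v$ be its minimum vertex. Put $C' = \{w - v + 1 : w \in C\}$. Then $\min C' = 1$ and $\max C' = \max C - v + 1 \le n$, so $C' \subseteq [n]$; moreover $|C'| = |C|$ since $w \mapsto w - v + 1$ is injective. For $w_1, w_2 \in C$, $|(w_1 - v + 1) - (w_2 - v + 1)| = |w_1 - w_2| \in \{t_1, \dots, t_k\}$ whenever $w_1 \ne w_2$, so by Proposition~\ref{prop:adj} every two distinct vertices of $C'$ are adjacent in $G$; that is, $C'$ is a clique. Since $|C'| = |C| = \omega(G)$, $C'$ is a maximum clique, and $1 \in C'$. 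This completes the proof.
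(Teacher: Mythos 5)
Your proposal is correct and is essentially the same argument as the paper's: both take a maximum clique with minimum vertex $v$, translate it by $-(v-1)$ so the minimum becomes $1$, and invoke Proposition~\ref{prop:adj} to conclude that the translated set is still a clique of the same size. No issues.
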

 \begin{proof} Let $S_1=\{i_1, i_2, \ldots, i_{\ell}\}$ be a maximum clique in $G$ and let $i_1 = \min S_1$.
 Then, by Proposition~\ref{prop:adj}, $|i_u - i_v| \in \{t_1,\ldots,t_k\}$ for each  $\{u,v\} \subset [l]$.
  If $i_1 = 1$, then we are done. If $i_1 >1$, then the vertices $1, i_2-i_1+1, i_3-i_1+1, \ldots, i_{\ell}-i_1+1$ form a clique
  of size $\ell$ in $G$ by Proposition~\ref{prop:adj}. \end{proof}

 In the following, we present a condition for a Toeplitz graph being $K_q$-free.
 \begin{theorem} Let $G=G_n\langle t_1, t_2,\ldots, t_k\rangle$. Then $G$ is $K_q$-free if and only  if
 for any subset $S \subseteq [k]$ with size $q-1$, there is a pair of distinct integers $a,b \in S$ such that $|t_a - t_b| \notin \{t_1,\ldots,t_k\}$.
  \end{theorem}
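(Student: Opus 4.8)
The plan is to recast the statement as a clean equivalence about cliques: $G$ contains $K_q$ as a subgraph if and only if there is a set $S\subseteq[k]$ with $|S|=q-1$ such that $|t_a-t_b|\in\{t_1,\ldots,t_k\}$ for every pair of distinct $a,b\in S$. Negating both sides yields the theorem verbatim, so it suffices to prove this equivalence. Writing $T=\{t_1,\ldots,t_k\}$, the reformulation I would use is Proposition~\ref{prop:adj}: a set of vertices $\{v_1,\ldots,v_q\}$ is a clique exactly when all pairwise differences $|v_i-v_j|$ lie in $T$. Since adding a fixed constant to every vertex of a clique preserves all differences, and shifting the labels \emph{downward} keeps them in $[n]$, the first reduction, exactly as in the proof of Lemma~\ref{thm11}, is that if $G$ has any $K_q$ then it has one whose least vertex is $1$.

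For the forward direction I would start from a clique $\{1,v_2,\ldots,v_q\}$ with $1<v_2<\cdots<v_q$. Each $v_j-1$ is the difference of $v_j$ and $1$, hence equals $t_{a_j}$ for some $a_j\in[k]$, and distinctness of the $v_j$ forces the $a_j$ to be distinct, so $S=\{a_2,\ldots,a_q\}$ has size $q-1$. For $i\ne j$, adjacency of $v_i$ and $v_j$ gives $|t_{a_i}-t_{a_j}|=|v_i-v_j|\in T$, which is exactly the required property of $S$. For the converse, given $S=\{a_1,\ldots,a_{q-1}\}$ with the stated property, I would exhibit the $q$ vertices $1,\,1+t_{a_1},\,\ldots,\,1+t_{a_{q-1}}$: they are pairwise distinct and all lie in $[n]$ because $t_{a_i}\le t_k<n$; vertex $1$ is adjacent to each $1+t_{a_i}$ since $t_{a_i}\in T$; and $1+t_{a_i}$ is adjacent to $1+t_{a_j}$ since $|t_{a_i}-t_{a_j}|\in T$ by assumption. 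Hence these vertices induce a $K_q$ in $G$, proving the equivalence.

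Finally I would dispose of the degenerate range $k<q-1$: there is then no subset $S$ of size $q-1$, so the right-hand condition holds vacuously, and indeed any clique of $G$ containing vertex $1$ uses only the at most $k$ neighbors of $1$, so $\omega(G)\le k+1\le q-1<q$ and $G$ is $K_q$-free, consistent with the claim. Beyond this case distinction the argument is pure bookkeeping with Proposition~\ref{prop:adj}; the only point I would take care to spell out is the translation step, namely that shifting a clique down so its minimum becomes $1$ produces genuine vertices of $G_n\langle t_1,\ldots,t_k\rangle$ — which holds since the labels only decrease and the new minimum is exactly $1$. I do not foresee a substantive obstacle beyond stating that reduction cleanly.
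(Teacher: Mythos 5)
Your proposal is correct and follows essentially the same route as the paper: both reduce to cliques containing vertex $1$ via the translation argument of Lemma~\ref{thm11} and then translate between $(q-1)$-subsets of $N(1)=\{t_1+1,\ldots,t_k+1\}$ and $(q-1)$-subsets of $[k]$ using Proposition~\ref{prop:adj}. Your write-up is somewhat more explicit about the converse construction and the degenerate case $k<q-1$, but the underlying argument is the same.
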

\begin{proof} Let $N(1)$ be the set of neighbors of $1$ in $G$. Then $N(1) =\{t_1+1,t_2+1,\ldots, t_k+1\}$.
Now $G$ is $K_q$-free if and only if $1$ does not belong to a clique of size $q$ (by Lemma~\ref{thm11}), equivalently, any subset of $N(1)$ with size $q-1$ contains a pair $t_i+1$, $t_j+1$ such that $ |t_i-t_j|= |(t_i+1) - (t_j +1)| \notin \{t_1,\ldots,t_k\}$.

If  any subset $S \subseteq [k]$ with size $q-1$ contains a pair of distinct integers $a,b \in S$ such that $|t_a - t_b| \notin \{t_1,\ldots,t_k\}$, then any subset of $N(1)$ with size $q-1$ contains a pair $t_i+1$, $t_j+1$ such that $ |t_i-t_j| \notin \{t_1,\ldots,t_k\}$.
Suppose that any subset of $N(1)$ with size $q-1$ contains a pair $t_i+1$, $t_j+1$ such that $ |t_i-t_j| \notin \{t_1,\ldots,t_k\}$. Let $S$ be a subset of $[k]$ with size $q-1$. Then $\{ t_i +1 \mid i \in S\}$ is a subset of $N(1)$ with size $q-1$, so there is a pair $t_i+1$, $t_j+1$ such that $ |t_i-t_j| \notin \{t_1,\ldots,t_k\}$ by the last equivalence above. Thus there is a pair of distinct integers $a,b \in S$ such that $|t_a - t_b| \notin \{t_1,\ldots,t_k\}$.\end{proof}

 \begin{corollary}\label{thm12} Let $G=G_n\langle t_1, t_2,\ldots, t_k\rangle$. Then $G$ is triangle-free if and only if $|t_i-t_j|\notin \{t_1, t_2, \ldots, t_k\}$  for any pair $i,j \in [k]$.
 \end{corollary}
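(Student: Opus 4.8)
The plan is to specialize the preceding theorem to the case $q=3$, since a graph is triangle-free exactly when it contains no $K_3$. First I would identify the relevant subsets: the subsets $S\subseteq[k]$ of size $q-1=2$ are precisely the two-element sets $\{i,j\}$ of distinct indices in $[k]$, and for such an $S$ the only choice of two distinct elements $a,b\in S$ is $\{a,b\}=\{i,j\}$ itself. Consequently the theorem's condition ``for every $S\subseteq[k]$ with $|S|=2$ there exist distinct $a,b\in S$ with $|t_a-t_b|\notin\{t_1,\ldots,t_k\}$'' collapses to ``$|t_i-t_j|\notin\{t_1,\ldots,t_k\}$ for every pair of distinct $i,j\in[k]$'', which by the theorem is equivalent to $G$ being $K_3$-free, i.e.\ triangle-free.

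It then remains only to check that the corollary's phrasing, which quantifies over \emph{any} pair $i,j\in[k]$ rather than only distinct ones, says the same thing: when $i=j$ we have $|t_i-t_j|=0$, and since $1\le t_1$ we have $0\notin\{t_1,\ldots,t_k\}$, so the case $i=j$ holds automatically and imposes no constraint. Thus no genuine obstacle arises here; the only point needing care is this harmless bookkeeping about the degenerate case $i=j$.

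For completeness I would note an alternative direct route that avoids the theorem: by Lemma~\ref{thm11}, a triangle of $G$, if one exists, may be assumed to contain the vertex $1$, whose neighborhood is $N(1)=\{t_1+1,\ldots,t_k+1\}$; such a triangle is then a set $\{1,\,t_i+1,\,t_j+1\}$ in which $t_i+1$ is adjacent to $t_j+1$, i.e.\ $|t_i-t_j|\in\{t_1,\ldots,t_k\}$ by Proposition~\ref{prop:adj}. Hence $G$ contains a triangle if and only if some pair $i,j\in[k]$ satisfies $|t_i-t_j|\in\{t_1,\ldots,t_k\}$, and negating yields the corollary. Either route works, but the specialization of the theorem is the shorter one.
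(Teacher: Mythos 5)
Your proposal is correct and matches the paper's (implicit) argument: the corollary is stated immediately after the $K_q$-free theorem precisely as its specialization to $q=3$, which is exactly your main route. Your remark that the degenerate case $i=j$ is vacuous since $0\notin\{t_1,\ldots,t_k\}$ is a harmless but accurate piece of bookkeeping, and the alternative direct argument via Lemma~\ref{thm11} is also sound.
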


 For a Toeplitz graph $G:=G_n\langle t_1, \ldots, t_k\rangle$, we denote $B(G)=\{t_1, \ldots, t_k\}$.

 \begin{lemma} Let $G=G_n\langle t_1, t_2,\ldots, t_k\rangle$. Then $|t_i - t_j| \in B(G)$ for every $\{t_i,t_j\} \subset B(G)$ with $i \ne j$ if and only if $t_i = it_1$ for each $i \in [k]$.\label{lem:consecutive}\end{lemma}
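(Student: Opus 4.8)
The plan is to prove the two implications separately, handling the forward ($\Rightarrow$) direction by strong induction on the index $i$. Observe first that, since $t_1<\cdots<t_k$ are distinct, the hypothesis simply asserts that the difference of any two elements of $B(G)$ again lies in $B(G)$. The reverse direction is then immediate: if $t_i=it_1$ for every $i\in[k]$ and $i<j$, then $|t_i-t_j|=(j-i)t_1=t_{j-i}$, and because $1\le j-i\le k-1$ the index $j-i$ belongs to $[k]$, so $t_{j-i}\in B(G)$.

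For the forward direction, assume $|t_i-t_j|\in B(G)$ for all distinct $i,j$. I would show $t_i=it_1$ by strong induction on $i$, the case $i=1$ being trivial. For $i\ge 2$, the key point is that it suffices to examine the single difference $t_i-t_1$: this is a positive integer lying in $B(G)$, and since $t_1=\min B(G)$ we have $t_1\le t_i-t_1<t_i$, whence $t_i-t_1=t_m$ for some $m\in[i-1]$. The crux is to exclude $m\le i-2$: in that case the induction hypothesis gives $t_i=t_1+t_m=(m+1)t_1\le(i-1)t_1=t_{i-1}$, contradicting $t_{i-1}<t_i$. Hence $m=i-1$, and the induction hypothesis yields $t_i=t_1+t_{i-1}=it_1$, completing the step.

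I expect the only real subtlety to be this bookkeeping in the inductive step: recognizing that one well-chosen difference (namely $t_i-t_1$, rather than all differences $t_i-t_j$) already carries the needed information, and then using strict monotonicity $t_1<\cdots<t_k$ together with the induction hypothesis to pin down the index $m$. No inequalities beyond these order comparisons are required, so once the induction is set up correctly the argument is routine.
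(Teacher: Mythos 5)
Your proof is correct and follows essentially the same route as the paper: both directions match, and your inductive step (examining $t_i-t_1$, locating it in $\{t_1,\dots,t_{i-1}\}$, and ruling out all but $t_{i-1}$ via the induction hypothesis and monotonicity) is just a careful formalization of the paper's ``repeat this procedure'' argument. No gaps.
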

 \begin{proof}
We can easily check the `if' part. To show the `only if' part, suppose that $|t_i - t_j| \in B(G)$ for every $\{t_i,t_j\} \subset B(G)$ with $i \ne j$. Then $t_2 - t_1 \in B(G)$. Since $t_2 - t_1 <
 t_2$, $t_2 - t_1 = t_1$. Therefore $t_2 = 2t_1$. By the supposition again, $t_3 - t_1 \in B(G)$. Since $t_3 -t_1 <t_3$, $t_3 - t_1 =
 t_2$ or $t_3 - t_1 = t_1$. If $t_3 - t_1 = t_1$, then $t_3 = t_2$, a contradiction. Thus $t_3 - t_1 = t_2$ and consequently $t_3 = 3t_1$.
 By repeating this procedure, we conclude that $t_i=it_1$ for each $i \in [k]$ and thus $B(G) = \{t_1, 2t_1,\ldots,kt_1\}$.
 \end{proof}

We denote the degree of the vertex $i$ by $\deg(i)$ in a Toeplitz graph.

\begin{theorem} \label{thm15} Let $G=G_n\langle t_1, t_2,\ldots, t_k\rangle$. Then $\omega(G) \le k+1$.
Furthermore, the equality holds if and only if $t_i = it_1$ for each  $i \in [k]$.\label{thm:toecliquesize}\end{theorem}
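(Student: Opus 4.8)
The plan is to prove the two assertions separately, using the vertex $1$ as an anchor via Lemma~\ref{thm11}.

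For the inequality $\omega(G)\le k+1$: let $S=\{i_0<i_1<\cdots<i_{\ell-1}\}$ be a maximum clique, so $\ell=\omega(G)$. By Lemma~\ref{thm11} I may assume $i_0=1$. For each consecutive pair in the clique, Proposition~\ref{prop:adj} gives $i_{m}-i_{m-1}\in\{t_1,\ldots,t_k\}=B(G)$ for $m=1,\ldots,\ell-1$; but also the differences $i_m-i_0\in B(G)$ for all $m\ge 1$. Thus $S-i_0=\{0,i_1-1,\ldots,i_{\ell-1}-1\}$ has the property that each of its $\ell-1$ nonzero elements lies in $B(G)$, and these are $\ell-1$ distinct elements of a set of size $k$. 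Hence $\ell-1\le k$, i.e.\ $\omega(G)\le k+1$.

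For the equality characterization, the ``if'' direction is easy: if $t_i=it_1$ for each $i\in[k]$, then (since $n\ge t_k+1=kt_1+1$) the vertices $1,1+t_1,1+2t_1,\ldots,1+kt_1$ are pairwise adjacent by Proposition~\ref{prop:adj}, because any difference $(1+at_1)-(1+bt_1)=(a-b)t_1$ with $1\le |a-b|\le k$ lies in $B(G)$; this is a clique of size $k+1$, so $\omega(G)=k+1$ by the bound just proved. For the ``only if'' direction, suppose $\omega(G)=k+1$ and take a maximum clique containing $1$, say with vertex set $\{1\}\cup\{1+s_1,\ldots,1+s_k\}$ where $s_1<\cdots<s_k$ are distinct; as above, $s_1,\ldots,s_k\in B(G)$, and since $|B(G)|=k$ this forces $\{s_1,\ldots,s_k\}=B(G)$, i.e.\ $s_i=t_i$ for all $i$. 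But the clique condition also requires $|t_i-t_j|=|s_i-s_j|\in B(G)$ for every pair $i\ne j$. By Lemma~\ref{lem:consecutive}, this is exactly the hypothesis that yields $t_i=it_1$ for each $i\in[k]$, completing the proof.

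I expect no serious obstacle here: the argument is a clean assembly of the preceding lemmas, with the only mild subtlety being to note that a maximum clique through $1$ has its other vertices recording $k$ distinct elements of $B(G)$, which by the pigeonhole/cardinality count must be all of $B(G)$ — and then invoking Lemma~\ref{lem:consecutive}. (One should also remember implicitly that $t_k<n$, so the explicit clique $1,1+t_1,\ldots,1+kt_1$ in the ``if'' direction actually fits inside $[n]$ when $t_i=it_1$, which holds since $kt_1=t_k<n$.)
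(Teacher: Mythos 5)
Your proof is correct and takes essentially the same route as the paper's: anchor a maximum clique at the vertex $1$ via Lemma~\ref{thm11}, bound its size by the $k$ neighbours of $1$, exhibit the clique $\{1,1+t_1,\ldots,1+kt_1\}$ for the ``if'' direction, and reduce the ``only if'' direction to Lemma~\ref{lem:consecutive} via the pairwise-difference condition. The only cosmetic difference is that you phrase the size bound as a pigeonhole count on distinct elements of $B(G)$ where the paper simply cites $\deg(1)=k$.
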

\begin{proof} By Lemma~\ref{thm11}, there is a maximum clique that contains the vertex $1$.
Then the clique is contained in the closed neighborhood of $1$. Since  $\deg(1) = k$, the inequality holds.
 Now we show the equality part.
If $B(G)=\{t_1, 2t_1, \ldots, kt_1\}$, then $\{1,1+t_1,\ldots,1+kt_1\}$ is a clique of size $k+1$ in $G$ and so $\omega(G) = k+1$.

Suppose that $\omega(G)=k+1$. Then, by Lemma~\ref{thm11}, there is a maximum clique of size $k+1$ containing $1$. Therefore $\{1,1+t_1,\ldots,1+t_k\}$ is a clique. Now, for each $i,j \in [k]$, $1+t_i$ and $1+t_j$ are adjacent since they belong to the same clique. Then $|(1+t_j) - (1+t_i)| = |t_j -t_i| \in B(G)$ by Proposition~\ref{prop:adj}, so $B(G)  = \{t_1,2t_1,\ldots,k t_1\}$ by Lemma~\ref{lem:consecutive}.\end{proof}

\begin{theorem} Let $G=G_n\langle t, 2t,\ldots, kt\rangle$. Then
$\theta_E(G) = \max \{t,n-kt\}$.
Moreover, for $H:=G_n\langle s_1, s_2,\ldots, s_k\rangle$ with $s_1 = t$ and $s_k = kt$, $\theta_E(G) \le \theta_E(H)$.
\label{thm:toecc}
\end{theorem}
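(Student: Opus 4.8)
\medskip\noindent\emph{Proof sketch.} The plan is to split $G$ into connected pieces, bound $\theta_E(G)$ from above by an explicit edge clique cover, match this from below by exhibiting two families of edges that no single clique can absorb, and then observe that the lower‑bound argument carries over verbatim to $H$.

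\emph{Decomposition and upper bound.} By Proposition~\ref{Toeplitz-chordal-2}, $G$ is the disjoint union of graphs $H_1,\dots,H_t$ with $H_r\cong G_{m_r}\langle 1,2,\dots,k\rangle$ and $m_r=\lfloor(n-r)/t\rfloor+1$, whose vertex sets partition $[n]$; hence $\sum_{r=1}^t m_r=n$, and since every clique lies inside a single component, $\theta_E(G)=\sum_{r=1}^t\theta_E(G_{m_r}\langle 1,\dots,k\rangle)$. For a single piece $G_m\langle 1,\dots,k\rangle$: if $m\ge k+1$ the $m-k$ cliques $C_j=\{j,j+1,\dots,j+k\}$ ($1\le j\le m-k$) cover every edge, because an edge $\{a,b\}$ with $a<b$ has $b-a\le k$ and hence lies in $C_{\max\{1,\,b-k\}}$; if $2\le m\le k$ all vertices are pairwise adjacent (Proposition~\ref{prop:adj}) and one clique suffices; so $\theta_E(G_m\langle 1,\dots,k\rangle)\le\max\{1,m-k\}$ whenever $m\ge 2$. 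Now $m_r\ge k+1$ iff $r\le n-kt$. If $n\ge(k+1)t$ then every $m_r\ge k+1$, so $\theta_E(G)\le\sum_{r=1}^t(m_r-k)=n-kt$; if $kt<n<(k+1)t$ then $k\le m_r\le k+1$ for all $r$ (from $n>kt$ and $n<(k+1)t$), so each piece contributes at most $1$ and $\theta_E(G)\le t$. In either case $\theta_E(G)\le\max\{t,n-kt\}$.

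\emph{Lower bound, and the comparison with $H$.} Two families of edges of $G$ force many cliques. First, the $n-kt$ ``long'' edges $\{i,i+kt\}$ ($1\le i\le n-kt$): no clique contains two of them, since a clique containing $\{i,i+kt\}$ and $\{i',i'+kt\}$ with $i<i'$ would contain the adjacent pair $i,\,i'+kt$, whose difference $i'+kt-i$ exceeds $kt=\max B(G)$, contradicting Proposition~\ref{prop:adj}. Hence $\theta_E(G)\ge n-kt$. Second, assuming $n\ge 2t$ (automatic when $k\ge 2$, since then $kt<n$ forces $n>2t$), the $t$ ``short'' edges $\{i,i+t\}$ ($1\le i\le t$): no clique contains two of them, because it would contain an adjacent pair $i,j$ with $1\le i<j\le t$, whose difference $j-i<t=\min B(G)$ cannot lie in $B(G)$. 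Hence $\theta_E(G)\ge t$. Combining, $\theta_E(G)=\max\{t,n-kt\}$ (for $k=1$ one needs $n\ge 2t$ for this, and then the ``moreover'' is vacuous since $H=G_n\langle t\rangle=G$). Finally, both families are still edges of $H=G_n\langle s_1,\dots,s_k\rangle$, because $kt=s_k\in B(H)$ and $t=s_1\in B(H)$; and the two arguments work word for word with $B(H)$ replacing $B(G)$, using $\max B(H)=s_k=kt$ and $\min B(H)=s_1=t$. Therefore $\theta_E(H)\ge\max\{t,n-kt\}=\theta_E(G)$.

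\emph{Main obstacle.} The one genuinely fiddly point is the upper‑bound bookkeeping --- verifying that the window cliques exhaust every edge of a path power, and then summing $m_r-k$ against $\sum_r m_r=n$ while separating the regimes $n\ge(k+1)t$ and $kt<n<(k+1)t$ (and recognizing that the clean value $\max\{t,n-kt\}$ really does presuppose $n\ge 2t$, which costs nothing when $k\ge 2$). The lower bound and the inequality $\theta_E(G)\le\theta_E(H)$ are each just a one‑line distance computation.
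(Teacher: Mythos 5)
Your proposal is correct and follows essentially the same route as the paper: the same window cliques $\{i,i+t,\dots,i+kt\}$ for the upper bound (you organize them per component and sum $\max\{1,m_r-k\}$ against $\sum_r m_r=n$, the paper writes them down directly in $G$ for $n>(k+1)t$ and invokes completeness of the components for $kt<n\le(k+1)t$), the same family of long edges $\{i,i+kt\}$, $1\le i\le n-kt$, for the lower bound $n-kt$, and the same transfer of these edge families to $H$ for the ``moreover'' part. The one substantive divergence is the lower bound $\theta_E(G)\ge t$: the paper deduces it from the fact that $G$ has $t$ components, whereas you exhibit the $t$ short edges $\{i,i+t\}$, $1\le i\le t$, no two of which fit in a common clique --- which is exactly the device the paper itself uses for $H$ (its set $I$), just applied to $G$ as well. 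Your version is the more careful one: the component count only bounds $\theta_E$ from below when every component actually contains an edge, and your explicit flag that the identity $\theta_E(G)=\max\{t,n-kt\}$ presupposes $n\ge 2t$ (automatic for $k\ge 2$) correctly isolates the degenerate case $k=1$, $t<n<2t$, where some components are isolated vertices, $\theta_E(G)=n-t<t$, and the stated formula (and the paper's component-count step) fails. So your argument is not only faithful to the paper's but patches its one soft spot.
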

\begin{proof} Suppose that $n-kt \le t$. Then $n \le (k+1)t$.
By Proposition~\ref{Toeplitz-chordal-2}, $G$ has $t$ components and so $\theta_E(G) \ge t$.
Again, by Proposition~\ref{Toeplitz-chordal-2}, each component is isomorphic to $G_{\lfloor(n-i)/t \rfloor+1}\langle 1, 2,\ldots,
k\rangle$ for each  $i\in [t]$. Since $n \le (k+1)t$, the number of vertices in a component is at most $k+1$ and so $G_{\lfloor(n-i)/t \rfloor+1}\langle 1, 2,\ldots,
k\rangle$ for each $i\in [t]$ is a complete graph by Proposition~\ref{prop:adj}. Thus $\theta_E(G) = t$.

Now, suppose that $t < n-kt$. Then $n > (k+1)t$.
Let $C_i = \{i,i+t,\ldots,i+kt\}$ for each $i \in \{1,\ldots, n-kt\}$.
Then, for each  $i \in \{1,\ldots, n-kt\}$, an element in $C_i$ is at least $i\ge 1$ and at most $i+kt \le (n-kt)+kt =n$, so $C_i$ is a vertex set of $G$ .
  In addition, $C_i$ is a clique for each  $i \in \{1,\ldots, n-kt\}$ by definition. Take an edge $uv \in E(G)$ such that $u < v$.
   If $kt < v$, then $C_{v-kt} = \{v-kt, v-(k-1)t,\ldots,v\}$ contains $u$ and $v$.
  Suppose that $v \le kt$. Then there exists an integer $r$  such that $r \equiv v \pmod{t}$ with $1 \le r \le t$. Obviously, $r \le v$ and $v-r$ is a multiple of $t$. Since $v \le kt$, $v -r \le kt$ and so $v -r \in   \{t,2t,\ldots,kt\}$. Thus $C_r = \{r,r+t,\ldots,r+kt\}$ contains $v$.
      Thus $\{C_i \mid 1 \le i \le n-kt\}$ is an edge-clique cover of $G$ and so $\theta_E(G) \le n-kt$.

  Now, let $F= \{i\ i+kt \mid 1 \le i \le n-kt\}$. Take edges $i\ i+kt$ and $j\ j+kt$ for some $1\le i < j \le n-kt$. Since $j+kt-i > kt$, $i$ and $j+kt$ are not adjacent by Proposition~\ref{prop:adj} and so $i\ i+kt$ and $j\ j+kt$ do not belong to the same clique. Thus $\theta_E(G) \ge |F| = n-kt$ and hence $\theta_E(G_n) = n-kt$.

 Let $H:=H_n\langle s_1, s_2,\ldots, s_k\rangle$ be a Toeplitz graph with $s_1 = t$ and $s_k = kt$. Note that $G=H$ if $k \le 2$, so $\theta_E(G) = \theta_E(H)$. We assume that $k \ge 3$.
Suppose that $kt <  n \le (k+1)t$. Let $I=\{i\ i+s_1 \mid 1 \le i \le s_1\}$. Since $k \ge 3$, $2s_1=2t< kt < n$ and so, by Proposition~\ref{prop:adj}, $I \subset E(H)$.
Take edges $i\ i+s_1$ and $j\ j+s_1$ for some $1\le i < j \le s_1$. Since $j-i < s_1$, $i$ and $j$ are not adjacent by Proposition~\ref{prop:adj} and so $i\ i+s_1$ and $j\ j+s_1$ do not belong to the same clique. Thus we have shown that $\theta_E(H) \ge |I| = s_1 = t = \theta_E(G)$ if $kt <  n \le (k+1)t$.
Suppose that $n > (k+1)t$. Since $s_k = kt$, $F \subset E(H)$ by Proposition~\ref{prop:adj} and so $\theta_E(H) \ge |F| =n-kt$. Therefore $\theta_E(G) \le \theta_E(H)$ if $(k+1)t < n$. Thus the ``moreover" part is true.
\end{proof}

 In the following, we compute the vertex clique cover number of a Toeplitz graph $G_n \langle t,2t,\ldots,kt \rangle$.
 \begin{theorem}
 Let $G=G_n\langle t, 2t,\ldots, kt\rangle$ for $n > (2k-1)t$. Then $$\theta_v(G)=s \left\lceil \frac{\lceil n/t\rceil}{k+1}\right\rceil+(t-s)\left\lceil \frac{\lfloor n/t\rfloor}{k+1}\right\rceil$$
 where  $s$ is the positive integer such that $s\equiv n  \pmod{t}$ and $1 \le s \le t$.
 \end{theorem}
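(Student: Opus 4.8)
The plan is to reduce everything to the connected components of $G$. By Proposition~\ref{Toeplitz-chordal-2}, $G$ is the disjoint union of $t$ components $H_1,\ldots,H_t$ with $H_i\cong G_{m_i}\langle 1,2,\ldots,k\rangle$ and $m_i=\lfloor(n-i)/t\rfloor+1$. Every clique of a graph is contained in a single component, so the clique cover number is additive over components and $\theta_v(G)=\sum_{i=1}^{t}\theta_v\big(G_{m_i}\langle 1,\ldots,k\rangle\big)$. Hence it is enough to (a) give a closed form for $\theta_v\big(G_m\langle 1,\ldots,k\rangle\big)$, and (b) count how many $m_i$ equal $\lceil n/t\rceil$ and how many equal $\lfloor n/t\rfloor$.

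For (a) I will show $\theta_v\big(G_m\langle 1,\ldots,k\rangle\big)=\lceil m/(k+1)\rceil$ for every $m\ge1$. By Proposition~\ref{prop:adj}, in $G_m\langle 1,\ldots,k\rangle$ two vertices are adjacent iff they differ by at most $k$; thus a vertex set is a clique iff it lies inside a window of $k+1$ consecutive integers, and in particular $\omega\big(G_m\langle 1,\ldots,k\rangle\big)\le k+1$. For the lower bound, if $Q_1,\ldots,Q_r$ is a clique cover then $m\le\sum_j|Q_j|\le r(k+1)$, so $r\ge\lceil m/(k+1)\rceil$. For the upper bound, partitioning $\{1,\ldots,m\}$ into the consecutive blocks $\{1,\ldots,k+1\},\{k+2,\ldots,2(k+1)\},\ldots$ produces $\lceil m/(k+1)\rceil$ cliques covering all vertices. (This identity holds for all $m\ge1$; the hypothesis $n>(2k-1)t$ only serves here to force $m_i\ge 2k-1$ for each $i$.)

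For (b), write $n=at+b$ with $0\le b\le t-1$, so $a=\lfloor n/t\rfloor$; if $b\ge1$ then $\lceil n/t\rceil=a+1$ and $s=b$, while if $b=0$ then $\lceil n/t\rceil=a$ and $s=t$. For $1\le i\le t$ we have $n-i=at+(b-i)$, so $\lfloor(n-i)/t\rfloor=a$ when $1\le i\le b$ and $\lfloor(n-i)/t\rfloor=a-1$ when $b<i\le t$; equivalently $m_i=\lceil n/t\rceil$ for $1\le i\le s$ and $m_i=\lfloor n/t\rfloor$ for $s<i\le t$ (in the case $b=0$ this reads as all $t$ components having size $a=n/t$, consistent with $s=t$). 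Substituting into $\theta_v(G)=\sum_{i=1}^{t}\lceil m_i/(k+1)\rceil$ gives
\[
\theta_v(G)=s\left\lceil\frac{\lceil n/t\rceil}{k+1}\right\rceil+(t-s)\left\lceil\frac{\lfloor n/t\rfloor}{k+1}\right\rceil,
\]
which is the claimed formula.

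As for difficulty: there is no genuinely hard step. The argument rests on the structural decomposition of Proposition~\ref{Toeplitz-chordal-2} together with the elementary observation that a clique of $G_m\langle 1,\ldots,k\rangle$ spans an interval of at most $k+1$ integers; the only place demanding care is the residue-and-floor bookkeeping of part (b), in particular getting the count $s$ versus $t-s$ right and checking the boundary case $t\mid n$, where an off-by-one slip is easiest to make.
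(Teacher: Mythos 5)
Your proposal is correct and follows essentially the same route as the paper: decompose $G$ into its $t$ components via Proposition~\ref{Toeplitz-chordal-2}, show each component $G_m\langle 1,\ldots,k\rangle$ has clique cover number $\left\lceil m/(k+1)\right\rceil$ (upper bound by consecutive blocks, lower bound because every clique has at most $k+1$ vertices), and sum over the $s$ components of size $\lceil n/t\rceil$ and the $t-s$ of size $\lfloor n/t\rfloor$. The only cosmetic differences are that the paper cites Theorem~\ref{thm15} to get $\omega(H_i)=k+1$ for the lower bound where you argue directly from the window structure, and that you carry out the residue bookkeeping for the component sizes explicitly rather than asserting it.
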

 \begin{proof}
  Since $s\equiv n  \pmod{t}$ and $1 \le s \le t$, it follows from Proposition~\ref{Toeplitz-chordal-2} that $G$ has $t$ components, each of the first $s$ components $H_1,\ldots,H_s$ is isomorphic to $G_{\lceil n/t \rceil}\langle 1, 2,\ldots,k\rangle$, and each of the other $t-s$ components $H_{s+1},\ldots, H_{t}$ is isomorphic to $G_{\lfloor n/t \rfloor}\langle 1, 2,\ldots,k\rangle$.

   Since $H_i \simeq G_{\lceil n/t \rceil}\langle 1, 2,\ldots, k\rangle$ for each  $i \in [s]$  and $k+1$ consecutive vertices of $\{1,2,\ldots,\lceil \frac{n}{t} \rceil \}$ form a clique in $G_{\lceil n/t \rceil}\langle 1, 2,\ldots,
k\rangle$, we have $\theta_v(H_i) \le \left \lceil \frac{\lceil n/t\rceil}{k+1} \right \rceil$ for each $i \in [s]$.
Now, by Theorem~\ref{thm15}, $\omega(H_i) = k+1$ for each $i \in [s]$, so we can conclude that $\theta_v(H_i) = \left \lceil \frac{\lceil n/t\rceil}{k+1} \right \rceil$  for each $i \in [s]$.
  Similarly, we can show that $\theta_v(H_i)= \left \lceil \frac{\lfloor n/t\rfloor}{k+1} \right \rceil$ for each $i \in \{s+1,\ldots, t\}$.
    Therefore \[\theta_v(G)=\sum_{i=1}^t \theta_v(H_i)=\sum_{i=1}^s \theta_v(H_i)+\sum_{i=s+1}^t \theta_v(H_i)=s \left\lceil \frac{\lceil n/t\rceil}{k+1}\right\rceil+(t-s)\left\lceil \frac{\lfloor n/t\rfloor}{k+1}\right\rceil.\qedhere\]
  \end{proof}

\section{Chordal Toeplitz graphs and Perfect Toeplitz graphs}\label{sec2}
 In this section, we study holes in Toeplitz graphs and give a condition for a Toeplitz graph not having holes, which leads to a characterization of chordal Toeplitz graphs.
Then we give equivalent conditions for a Toeplitz graph $G=G_n\langle t_1,t_2 \rangle$ being perfect.
By Theorem~\ref{thm:toecliquesize}, we know that, for $G=G_n\langle t_1, t_2,\ldots, t_k\rangle$, $\omega(G)=k+1$ if and only if $t_i = it_1$ for each $i \in [k]$. Yet, as long as $n \ge t_{k-1}+t_k$, we add more equivalent statements such as $G$ is chordal.

\begin{proposition}\label{prop:chord} For positive integers $k$ and $t$, $G_n\langle t, 2t,\ldots,kt\rangle$ is chordal.\end{proposition}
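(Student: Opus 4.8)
The plan is to prove directly that $G:=G_n\langle t,2t,\ldots,kt\rangle$ has no hole. Suppose, for contradiction, that $G$ contains an induced cycle $C$ of length $\ell\ge 4$, and let $w$ be the vertex of $C$ carrying the largest label, with $a$ and $b$ the two (necessarily distinct) neighbors of $w$ along $C$. Since $w$ is the maximum of $C$, both $a<w$ and $b<w$; by Proposition~\ref{prop:adj}, $w-a$ and $w-b$ lie in $\{t,2t,\ldots,kt\}$, so $w-a=it$ and $w-b=jt$ for some $i,j\in[k]$, and $i\ne j$ because $a\ne b$. Hence $|a-b|=|i-j|\,t$ with $1\le|i-j|\le k-1$, so $|a-b|\in\{t,2t,\ldots,(k-1)t\}\subseteq\{t,2t,\ldots,kt\}$, and Proposition~\ref{prop:adj} then gives that $a$ and $b$ are adjacent in $G$. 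But in an induced cycle of length at least $4$ the two neighbors of any vertex are non-adjacent, a contradiction. Therefore $G$ has no hole, that is, $G$ is chordal.

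An alternative, more constructive route uses Proposition~\ref{Toeplitz-chordal-2} to reduce to the connected case $G_m\langle 1,2,\ldots,k\rangle$ (a graph is chordal iff each of its components is), and then exhibits the ordering $1,2,\ldots,m$ as a perfect elimination ordering: by Proposition~\ref{prop:adj} the neighbors of a vertex $p$ that come after $p$ in this ordering are exactly $\{p+1,\ldots,\min\{p+k,m\}\}$, and any two of these differ by at most $k-1$, hence are adjacent; so the set of later neighbors of every vertex forms a clique, which is precisely the defining property of a perfect elimination ordering, and any graph admitting one is chordal.

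There is no genuine obstacle here; the only two points needing care are that the hypothesis $\ell\ge 4$ is used in an essential way — it is exactly what guarantees that the cycle-neighbors $a,b$ of $w$ are non-adjacent on $C$ — and the degenerate case $k=1$, which causes no trouble since then $G=G_n\langle t\rangle$ is a disjoint union of paths and contains no cycle at all, so the argument above is vacuous. I would present the first (minimal-cycle) argument as the proof, since it is the shortest and applies to $G$ directly without the component decomposition.
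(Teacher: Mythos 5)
Your first argument is correct and is essentially the paper's own proof: the paper also supposes a hole exists, locates a local extremum $v_i$ of the vertex labels along the cycle (your global maximum $w$ is such an extremum), and derives a chord between its two cycle-neighbors from the closure of $\{t,2t,\ldots,kt\}$ under differences; the only cosmetic differences are that you work in $G_n\langle t,2t,\ldots,kt\rangle$ directly instead of first reducing to $G_m\langle 1,2,\ldots,k\rangle$ via Proposition~\ref{Toeplitz-chordal-2}, and your alternative perfect-elimination-ordering argument, while also valid, is not needed. No gaps.
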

\begin{proof} By Proposition~\ref{Toeplitz-chordal-2}, it suffices to show that
$G:=G_{m}\langle 1, 2,\ldots,k\rangle$ is chordal for each integer $m$, $m > k$.
To reach a contradiction, suppose that $G$ contains a hole $C:= v_1 v_2 \ldots v_{\ell} v_1$ for some integer $\ell \ge 4$. We identify $v_{\ell+1}$ with $v_1$. Since the sequence $C$ cannot strictly increase or decrease, either $v_{i-1}< v_i$ and $v_i >v_{i+1}$ or $v_{i-1}>v_i$ and $v_i<v_{i+1}$ for some $i$, $2\le i\le\ell$.
Assume the former. Then $v_i-v_{i-1}=|v_i-v_{i-1}|=a$ and $v_i-v_{i+1}=|v_i-v_{i+1}|=b$ for some $a,b\in\{1,2,\ldots,k\}$ by Proposition~\ref{prop:adj}.
 Since $\ell \ge 4$, $a\ne b$ and so  $|v_{i-1}-v_{i+1}|=|a-b|\in\{1,2,\ldots,k-1\}$. Therefore $v_{i-1}v_{i+1}$ is a chord of $C$ and we reach a contradiction.
 We can similarly show that $C$ also has a chord in the latter case. Thus $G$ is chordal.
\end{proof}

For a path $P = v_1 v_2 \cdots v_k$, we denote by $P^{-1}$ the path $v_k v_{k-1} \cdots v_2 v_1$.

\begin{lemma} \label{lem:k2cycle} Let $G= G_n\langle t_1, t_2\rangle$ be a Toeplitz graph with $n \ge t_1+t_2$. If $t_2 \ne 2t_1$, then $G$ has a hole of length $(t_1+t_2)/ \gcd(t_1,t_2)$. \end{lemma}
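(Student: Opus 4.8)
The plan is to exhibit the hole explicitly as a short cycle living inside the first $t_1+t_2$ vertices. Put $d=\gcd(t_1,t_2)$ and $\ell=(t_1+t_2)/d$, and for $j=0,1,\ldots,\ell$ let $v_j$ be the unique element of $\{1,2,\ldots,t_1+t_2\}$ with $v_j\equiv 1+jt_1\pmod{t_1+t_2}$. Since $\ell t_1=(t_1+t_2)\cdot(t_1/d)$ is a multiple of $t_1+t_2$, we have $v_\ell=v_0=1$, and since $n\ge t_1+t_2$ each $v_j$ is a vertex of $G$. The candidate hole is $C:=v_0v_1\cdots v_{\ell-1}v_0$.

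The engine behind every verification is the congruence $t_2\equiv -t_1\pmod{t_1+t_2}$, which collapses the two ``forbidden differences'' of $G$ into one residue class once we work inside a window of $t_1+t_2$ consecutive integers. Indeed, for $0\le i,j\le\ell$ the difference $v_i-v_j$ lies in the open interval $(-(t_1+t_2),t_1+t_2)$ and satisfies $v_i-v_j\equiv(i-j)t_1\pmod{t_1+t_2}$; the only integers in that interval with absolute value $t_1$ or $t_2$ are $\pm t_1$ and $\pm t_2\equiv\mp t_1$, so by Proposition~\ref{prop:adj},
\[
v_iv_j\in E(G)\iff|v_i-v_j|\in\{t_1,t_2\}\iff (i-j)t_1\equiv\pm t_1\pmod{t_1+t_2}.
\]
Combining this with the elementary fact that $(t_1+t_2)\mid mt_1$ if and only if $\ell\mid m$ (because $\ell=t_1/d+t_2/d$ is coprime to $t_1/d$), one gets $v_iv_j\in E(G)\iff i-j\equiv\pm1\pmod\ell$, and similarly $v_i=v_j\iff i\equiv j\pmod\ell$.

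From here the argument finishes quickly. Fixing $j$ and letting $i$ range shows that $v_0,\ldots,v_{\ell-1}$ are pairwise distinct and that each $v_jv_{j+1}$ is an edge (in fact $v_{j+1}-v_j\in\{t_1,-t_2\}$), so $C$ is a cycle of length $\ell$; and the displayed equivalence says that among $v_0,\ldots,v_{\ell-1}$ the only adjacent pairs are the cyclically consecutive ones, so $C$ is chordless. Finally $\ell\ge4$: writing $t_1=da$, $t_2=db$ with $\gcd(a,b)=1$ and $a<b$ we have $\ell=a+b$, and $\ell=2$ would force $a=b$ while $\ell=3$ would force $(a,b)=(1,2)$, i.e.\ $t_2=2t_1$ — both excluded. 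Hence $C$ is a hole of length $\ell=(t_1+t_2)/\gcd(t_1,t_2)$.

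The only genuinely delicate point is the chordlessness, and that is exactly where $t_2\equiv -t_1\pmod{t_1+t_2}$ is doing the work: it is what makes $G$ restricted to the block $\{1,\ldots,t_1+t_2\}$ behave like the single-jump circulant on $t_1+t_2$ vertices with connection $t_1$, whose orbits under $+t_1$ are visibly induced cycles. I expect no trouble with the distinctness, the cycle property, or the $\ell\ge4$ bookkeeping; the one computation worth stating carefully is the divisibility fact $(t_1+t_2)\mid mt_1\iff\ell\mid m$, which is invoked in both directions of the key equivalence.
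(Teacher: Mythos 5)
Your proof is correct, and it takes a genuinely different route from the paper's. The paper builds the cycle procedurally: it lays down the $t_1$ disjoint ``$+t_1$'' paths $P_1,\ldots,P_{t_1}$, splices together sections of them via $t_2$-edges starting from the vertex $1$, counts the length as $\min\{x+y \mid yt_2=xt_1\}$, and then rules out chords by writing each cycle vertex as $1+u_it_2-v_it_1$ and doing a small case analysis on whether a chord would have difference $t_1$ or $t_2$. You instead observe that $t_2\equiv -t_1\pmod{t_1+t_2}$, so on the window $\{1,\ldots,t_1+t_2\}$ the adjacency condition $|v_i-v_j|\in\{t_1,t_2\}$ of Proposition~\ref{prop:adj} collapses to the single congruence $v_i-v_j\equiv\pm t_1\pmod{t_1+t_2}$; in other words, $G$ restricted to its first $t_1+t_2$ vertices \emph{is} the one-jump circulant on $\mathbb{Z}/(t_1+t_2)$ with connection $\pm t_1$, whose $+t_1$-orbits are visibly induced cycles of length $\ell=(t_1+t_2)/d$. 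The resulting hole is essentially the same vertex set as the paper's, but your verification is cleaner: existence of the cycle, distinctness of its vertices, and chordlessness all drop out of the single equivalence $v_iv_j\in E(G)\iff i-j\equiv\pm1\pmod{\ell}$, with the only computation being $(t_1+t_2)\mid mt_1\iff\ell\mid m$ (correctly justified via $\gcd(\ell,t_1/d)=1$). What the paper's version buys is an explicit traversal that makes the length count $x+y$ with $yt_2=xt_1$ transparent; what yours buys is brevity, a uniform chord argument with no case split, and a reusable structural fact (the induced subgraph on $t_1+t_2$ consecutive vertices is a disjoint union of $d$ induced $\ell$-cycles) that would, for instance, immediately give all $d$ holes rather than just one. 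The $\ell\ge 4$ bookkeeping at the end is also handled correctly.
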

\begin{proof}
For each $i \in \{1,\ldots,t_1\}$, let $P_i$ be the path such that\[
P_i = i \ i+t_1 \ i+2t_1 \cdots i+  \left\lfloor(n-i)/t_1\right\rfloor  t_1.\]
Then $P_1,\ldots,P_{t_1}$ are $t_1$ disjoint paths which contain all the vertices of $G$.

Now we construct a cycle in the following way.
We start from the vertex $1$ and consider the edge $1\ 1+t_2$. Then $1+t_2$ is on the path $P_j$ for $j \equiv 1+t_2 \pmod{t_1}$.
We denote by $P_1'$ the $(1+t_2,j)$-section of ${P_j}^{-1}$.
Since $j \le t_1$, $j+t_2\le t_1+t_2 \le n$ and so $j+t_2$ is a vertex of $G$.
Then we take the edge $j\ j+t_2$ and the path $P_k$ where $k \equiv j+t_2 \equiv 1+2t_2 \pmod{t_1}$.
We denote by $P_2'$ the $(j+t_2,k)$-section of ${P_k}^{-1}$.
Then $1$ $P_1'$ $P_2'$ is a $(1,k)$-path.
Noting that $1 \equiv 1+ x t_2 \pmod{t_1}$ has a solution $s:=t_1/d$ where $d=\gcd(t_1,t_2)$, we may conclude that $1$ $P_1'$ $P_2'$ $\cdots$ $P_s'$ is a cycle in $G$.
 By construction, the length of this cycle is the minimum of sum of two positive integers $x$ and $y$ satisfying $yt_2 = xt_1$, that is,  $\min \{x+y \mid yt_2 - xt_1=0, x,y \in \mathbb{Z}^+\} = (t_1+t_2)/d$.

To show that the cycle has no chord, take two vertices $u$ and $v$ with $u<v$ on the cycle. Then
$u=1+u_1t_2 - v_1t_1$ and $v=1+u_2t_2 - v_2t_1$ for some $\{u_1,u_2\} \subset \{1,\ldots, t_1/d\}$ and $\{v_1,v_2\} \subset \{1,\ldots, t_2/d\}$. If $u$ and $v$ are adjacent, $v-u$ is either $t_1$ or $t_2$.
Suppose $v-u=t_1$. Then $(u_2-u_1)t_2 - (v_2-v_1+1)t_1 = 0$. However, $|u_2-u_1| < t_1/d$ and so $u_2=u_1$ and $|v_2- v_1|=1$, which implies that $u$ and $v$ are consecutive on the cycle.
Similarly one can show that if $v-u=t_2$, then $u$ and $v$ are also consecutive.
Thus the cycle has no chord.
Since $t_2 \ne 2t_1$, $(t_1+t_2)/d >3$ and so the cycle is a hole.
\end{proof}

 The following theorem characterizes the chordal Toeplitz graphs $G=G_n\langle t_1, t_2,\ldots, t_k\rangle$ with $n \ge t_{k-1}+t_k$.
\begin{theorem} Let $G=G_n\langle t_1, t_2,\ldots, t_k\rangle$ be a Toeplitz graph. If $n \ge t_{k-1}+t_k$, then the following statements are equivalent.
\begin{itemize}
\item[(i)] $G$ is interval.
\item[(ii)] $G$ is chordal.
\item[(iii)] $t_i=it_1$ for each $i\in [k]$.
\item[(iv)] $\omega(G) = k+1$.
\end{itemize}\label{thm:chordequiv}
\end{theorem}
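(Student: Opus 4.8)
The plan is to prove the chain of equivalences (iii) $\Rightarrow$ (i) $\Rightarrow$ (ii) $\Rightarrow$ (iv) $\Rightarrow$ (iii), using the hypothesis $n \ge t_{k-1} + t_k$ only where it is genuinely needed. The implications (iii) $\Rightarrow$ (i) and (iv) $\Rightarrow$ (iii) require no new work: if $t_i = it_1$ for all $i$, then $G = G_n\langle t_1, 2t_1, \ldots, kt_1\rangle$, which by Proposition~\ref{Toeplitz-chordal-2} is a disjoint union of copies of $G_m\langle 1,2,\ldots,k\rangle$; each such graph is an interval graph (assign to vertex $j$ the interval $[j-k/2, j+k/2]$, or more cleanly $[j,j+k]$ intersected appropriately), so $G$ is interval. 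The implication (iv) $\Rightarrow$ (iii) is exactly the equality case of Theorem~\ref{thm:toecliquesize}. The implication (i) $\Rightarrow$ (ii) is immediate since every interval graph is chordal. So the entire content is the implication (ii) $\Rightarrow$ (iii) (or equivalently its contrapositive), and this is where the hypothesis $n \ge t_{k-1} + t_k$ enters.

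For (ii) $\Rightarrow$ (iii), I would argue the contrapositive: suppose $t_i \ne it_1$ for some $i$, and produce a hole in $G$. Let $j$ be the smallest index with $t_j \ne jt_1$; then $t_i = it_1$ for $i < j$, and in particular $t_{j-1} = (j-1)t_1$ (taking $t_0 = 0$). The key observation is that $t_{j-1}$ and $t_j$ play the roles of ``$t_1, t_2$'' in a two-generator subgraph: I want to invoke Lemma~\ref{lem:k2cycle} applied to the pair $(t_{j-1}, t_j)$ — but Lemma~\ref{lem:k2cycle} is about $G_n\langle t_1, t_2\rangle$, not about a subgraph of $G$ using only the jumps $t_{j-1}$ and $t_j$. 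The cycle constructed in the proof of Lemma~\ref{lem:k2cycle} uses only edges of length $t_{j-1}$ and $t_j$, and it lives inside $\{1, \ldots, n\}$ precisely because $n \ge t_{j-1} + t_j$; since $t_{j-1} + t_j \le t_{k-1} + t_k \le n$, the construction goes through inside $G$. So $G$ contains a cycle $C$ of length $(t_{j-1}+t_j)/\gcd(t_{j-1},t_j)$ using only jumps $t_{j-1}$ and $t_j$. The subtlety is that $C$ might have a chord \emph{in $G$} coming from one of the other generators $t_1, \ldots, t_k$ — Lemma~\ref{lem:k2cycle} only guarantees $C$ is chordless in the two-generator graph.

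The main obstacle, then, is showing $C$ has no chord in $G$: I must rule out a chord of length $t_\ell$ for every $\ell \in [k]$. Two vertices $u < v$ of $C$ satisfy $v - u = a t_{j-1} - b t_j$ (reduced appropriately, with $|a| < t_j/d$ type bounds where $d = \gcd(t_{j-1},t_j)$), and I need $v - u \notin \{t_1, \ldots, t_k\}$ unless $u,v$ are consecutive on $C$. Since $t_i = it_1$ for $i < j$ and $t_{j-1} = (j-1)t_1$ divides — wait, $t_1 \mid t_{j-1}$ and $t_1 \mid t_j$? No: $t_1 \mid t_{j-1}$ since $t_{j-1} = (j-1)t_1$, but there is no reason $t_1 \mid t_j$. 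Hmm — so let me instead observe $d := \gcd(t_{j-1}, t_j)$ divides every difference $v-u$ along $C$, so any chord of length $t_\ell$ forces $d \mid t_\ell$; combined with $t_\ell < t_{j-1} + t_j$ when $\ell$ is small and a counting/range argument for large $\ell$, I expect to derive a contradiction, though I anticipate this case analysis — separating $\ell < j$, $\ell = j-1, j$, and $\ell > j$ — is the delicate part and may need the full strength of $n \ge t_{k-1}+t_k$ plus the minimality of $j$. Once the hole is produced, $G$ is not chordal, completing (ii) $\Rightarrow$ (iii) and hence closing the cycle of equivalences.
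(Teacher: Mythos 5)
Your reductions (iii)$\Rightarrow$(i), (i)$\Rightarrow$(ii), and (iv)$\Leftrightarrow$(iii) are fine and coincide with the paper's. The gap is in (ii)$\Rightarrow$(iii), which you correctly identify as the real content but do not actually prove. Your plan --- take the least $j$ with $t_j\ne jt_1$, run the cycle construction of Lemma~\ref{lem:k2cycle} on the two jumps $t_{j-1},t_j$, and then argue the resulting cycle is chordless in all of $G$ --- fails at the last step, and the difficulty you flag is not a ``delicate case analysis'' but a genuine obstruction. Concretely, take $G_7\langle 2,3,4\rangle$: here $j=2$, and the Lemma~\ref{lem:k2cycle} cycle on jumps $2$ and $3$ is $1\,4\,2\,5\,3\,1$ of length $5$; but $5-1=4=2t_2-t_1=t_3$, so $\{1,5\}$ is a chord coming from the third generator, and your cycle is not a hole. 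Your proposed repairs do not save this: $\gcd(t_{j-1},t_j)=1$ here, so the divisibility filter is vacuous, and minimality of $j$ gives no control over the generators $t_\ell$ with $\ell>j$, whose lengths can be realized as combinations $at_j-bt_{j-1}$ between non-consecutive cycle vertices. (The graph $G_7\langle 2,3,4\rangle$ is indeed non-chordal, but the witness is a different cycle, namely the $4$-cycle $1\,3\,6\,4\,1$.)

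The paper's proof of (ii)$\Rightarrow$(iii) takes a different and essentially arithmetic route: for each pair $i<j$ with $t_i+t_j<n$ the $4$-cycle $1\,(1+t_i)\,(1+t_i+t_j)\,(1+t_j)\,1$ must have a chord, forcing $t_j-t_i\in B(G)$ or $t_j+t_i\in B(G)$; the hypothesis $n\ge t_{k-1}+t_k$ guarantees enough of these $4$-cycles exist. From these membership conditions one derives $t_k=t_j+t_{k-j}$ and $t_{k-1}=t_j+t_{k-j-1}$ for all relevant $j$ (when $k\ge 4$), whence $t_j=jt_1$ by telescoping; the cases $k=2,3$ are handled separately, and Lemma~\ref{lem:k2cycle} is invoked only in situations where any potential chord from an extra generator would have length exceeding every difference of vertices on the cycle (e.g.\ inside $G_{t_1+t_2}\langle t_1,t_2\rangle$ when $t_3=t_1+t_2$), which is exactly the control your construction lacks. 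To fix your proof you would need either to adopt this $4$-cycle bootstrapping, or to supply a separate argument ruling out chords of length $t_\ell$ for $\ell>j$ on your two-generator cycle --- and the example above shows no such argument can exist for the cycle as you build it.
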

\begin{proof}
$\mbox{(i)}\Rightarrow\mbox{(ii)}$ is obvious.
 By Theorem~\ref{thm:toecliquesize}, $\mbox{(iii)}\Leftrightarrow\mbox{(iv)}$.
 To complete the proof, we shall show that $\mbox{(ii)} \Rightarrow\mbox{(iii)}$ and $\mbox{(iii)} \Rightarrow\mbox{(i)}$.
 To show $\mbox{(ii)} \Rightarrow\mbox{(iii)}$, we denote by $C_4(t_i,t_j)$ the $4$-cycle
 \[1 \ (1+t_i) \ (1+t_i+t_j) \ (1+t_j) \  1\] for each $\{i,j\} \subset [k]$ with $i<j$ and $t_i+t_j < n$.
 Since $G$ is chordal, $C_4(t_i,t_j)$ has a chord for each $\{i,j\} \subset [k]$ with $i<j$ and $t_i+t_j<n$.
Thus, for each $\{i,j\} \subset [k]$ with $i<j$ and $t_j+t_i <n$, \begin{equation}
t_j - t_i \in \{t_1, t_2,\ldots, t_k\} \quad \mbox{or} \quad t_j+t_i \in \{t_1, t_2,\ldots, t_k\}. \label{eq:c4chord}\end{equation}

Suppose that $k=2$. If $t_2 \ne 2t_1$, then $G$ has a hole of length $(t_1+t_2)/ \gcd(t_1,t_2)$ by Lemma~\ref{lem:k2cycle} and we reach a contradiction. Thus $t_2 = 2t_1$.
Now we suppose that $k=3$. Since $t_1+t_2<t_1+t_3 < t_2+t_3 \le n$, $C_4(t_1,t_2)$ and $C_4(t_1,t_3)$ exist.
Therefore we have $t_2-t_1 = t_1$ or $t_1+t_2 =t_3$ from $C_4(t_1,t_2)$ and $t_3-t_1 \in \{t_1,t_2\}$ from $C_4(t_1,t_3)$.
If $t_2-t_1 = t_1$, then $t_3 - t_1 = t_2$, so $t_2= 2t_1$ and $t_3=3t_1$.
Assume that $t_1+t_2=t_3$. To reach a contradiction, suppose that $t_2 \ne 2t_1$. By definition, $H=G_{t_1+t_2}\langle t_1, t_2 \rangle$ is a subgraph of $G$ and $H$ contains a hole $C$ by Lemma~\ref{lem:k2cycle}. Since $G$ is chordal, $C$ has a chord in $G$. Then the difference of two ends of a chord is $t_3$, for otherwise the chord also exists in $H$. However, since $C$ is a subgraph of $H$, the difference of any pair of vertices on $C$ is at most $t_1+t_2-1 = t_3-1$ and we reach a contradiction.
Thus $t_2 = 2t_1$ and $t_3 = t_1+t_2 = 3t_1$.

 Now suppose $k\ge 4$.
  We consider the cycle $C_4(t_j,  t_k)$ for each $j \in [k-2]$.
 Then for each $j \in [k-2]$,  $t_j+t_k < n$  and $t_j+t_k \notin \{t_1,\ldots,t_k\}$ and so $t_k - t_j \in \{t_1,\ldots,t_k\}$ for each $j \in [k-2]$ by~\eqref{eq:c4chord}.
 To reach a contradiction, suppose that $t_k-t_{k-1} \notin B$. 
 Then $t_k-t_j \ne t_{k-1}$ for any $j \in [k-2]$ and so $\{t_k-t_1,t_k-t_2,\ldots,t_k-t_{k-2}\}= \{t_1,\ldots,t_{k-2}\}$. Since $t_k-t_1$ is the largest element in the set, we have $t_k-t_1 = t_{k-2}$.
  Then \[
  t_k= t_{k-2}+t_1 < t_{k-1}+t_{k-2} < t_{k}+t_{k-1} \le n,\] so $t_{k-1}-t_{k-2} \in B$ by \eqref{eq:c4chord}. However, $t_{k-1}-t_{k-2} < t_k-t_{k-2} = t_1$ 
and we reach a contradiction. Therefore $t_k-t_{k-1} \in B$ and so $\{t_k-t_1,t_k-t_2,\ldots,t_k-t_{k-1}\} = \{t_1,\ldots,t_{k-1}\}$.
 Thus  \begin{equation}
 t_k = t_j + t_{k-j} \quad \mbox{for each } j \in [k-1].\label{eq:chord1}\end{equation}
  For each  $j \in \{2,\ldots,k-2\}$,
    \[t_{k-1} - t_j \le t_{k-1} -t_2 < t_k -t_2 = t_{k-2}\] by \eqref{eq:chord1}. Yet, since $t_j+t_{k-1} > t_k$ for each  $j \in \{2,\ldots,k-2\}$ by \eqref{eq:chord1}, $t_{k-1} - t_j \in \{t_1,\ldots,t_k\}$ for each  $j \in \{2,\ldots,k-2\}$.
 Therefore \begin{equation} t_{k-1} - t_j  \le t_{k-3} \quad \mbox{for each }  j \in \{2,\ldots,k-2\}.\label{eq:k-1} \end{equation}
  In addition, since $t_{k-1}-t_{k-2} < t_{k} - t_{k-2} = t_2$ by \eqref{eq:chord1}, we have $t_{k-1}-t_{k-2} = t_1$. Thus, by \eqref{eq:k-1}, \begin{equation}
 t_{k-1} = t_j + t_{k-j-1} \quad \mbox{for each }   j \in \{1,\ldots,k-2\}.\label{eq:chord2}\end{equation}
By subtracting \eqref{eq:chord2} from \eqref{eq:chord1} for each $j \in \{1,\ldots,k-2\}$, we have\begin{equation}
t_k-t_{k-1} = t_{k-j} - t_{k-j-1}\quad \mbox{for each }  j \in \{1,\ldots,k-2\}.\label{eq:chord3}\end{equation}
By \eqref{eq:chord1}, $t_k - t_{k-1} = t_1$. Thus, by \eqref{eq:chord3},
\[ t_j = t_{j-1}+t_1 = t_{j-2}+2t_1 = \cdots = t_1 + (j-1)t_1 = jt_1\]
for each $j \in [k]$.

Now we show $\mbox{(iii)}\Rightarrow\mbox{(i)}$. Suppose that $G=G_n\langle t_1, 2t_1,\ldots, kt_1\rangle$. 

By Proposition~\ref{Toeplitz-chordal-2}, each component of $G$ is $G_{n_i} \langle 1,2,\ldots, k\rangle $ for some $n_i \le n$.
Therefore it suffices to show that $G_{n_i} \langle 1,2,\ldots, k\rangle $ is an interval graph for any $n_i$.
To each vertex $u \in [n_i]$, we assign an interval $[u, u+k]$. We note that $[u,u+k] \cap [v,v+k] \ne \emptyset$  if and only if $v-u \in [k]$, that is, $u$ and $v$ are adjacent in $G_{n_i} \langle 1,2,\ldots, k\rangle $.
Thus $G_{n_i} \langle 1,2,\ldots, k\rangle$ is an interval graph and hence we may conclude $G$ is an interval graph.  \end{proof}

 In the rest of this section, we characterize perfect Toeplitz graphs in the following by utilizing the results we have shown. We first introduce classes of well-known perfect graphs.
 \begin{theorem} {\rm \cite{Hou}}\label{thm22}
 Chordal graphs, cographs and bipartite graphs are perfect.
 \end{theorem}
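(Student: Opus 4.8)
The plan is to treat the three classes separately, using that each of them is closed under taking induced subgraphs; consequently it suffices to establish \emph{weak} perfection, i.e.\ $\omega(H)=\chi(H)$, for an arbitrary member $H$ of the class, and perfection follows at once. (One could instead quote the Strong Perfect Graph Theorem and merely verify that none of these graphs contains an odd hole or odd antihole, but the direct arguments below are shorter and self-contained.) The bipartite case is immediate: a bipartite graph $H$ has no odd cycle, so $\chi(H)\le 2$; if $H$ has an edge then $\omega(H)=2=\chi(H)$, and otherwise $H$ is edgeless with $\omega(H)=\chi(H)\le 1$. Since an induced subgraph of a bipartite graph is bipartite, bipartite graphs are perfect.

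For chordal graphs I would use a perfect elimination ordering. By Dirac's theorem every chordal graph $H$ has a simplicial vertex, and iterating this yields an ordering $v_1,\ldots,v_m$ of $V(H)$ in which $N(v_i)\cap\{v_{i+1},\ldots,v_m\}$ induces a clique for every $i$. Colouring greedily in the reverse order $v_m,v_{m-1},\ldots,v_1$, when $v_i$ is reached its already-coloured neighbours all lie in that clique and hence number at most $\omega(H)-1$, so a palette of $\omega(H)$ colours always leaves a free colour; thus $\chi(H)\le\omega(H)$, and equality follows since $\chi\ge\omega$ always holds. As induced subgraphs of chordal graphs are chordal, chordal graphs are perfect.

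For cographs I would argue by structural induction on the cotree. Recall that a cograph is precisely a $P_4$-free graph, that the class is hereditary, and that every cograph on at least two vertices is either a disjoint union $H_1\cup H_2$ or a join $H_1+H_2$ of two smaller cographs, with $K_1$ as base case. For a disjoint union both $\omega$ and $\chi$ equal the maximum of the corresponding parameters of $H_1$ and $H_2$, so the inductive hypothesis gives $\omega=\chi$. For a join, a clique meets $V(H_1)$ and $V(H_2)$ in cliques, whence $\omega(H_1+H_2)=\omega(H_1)+\omega(H_2)$; and since every vertex of $H_1$ is adjacent to every vertex of $H_2$, no colour may be reused across the two sides, so $\chi(H_1+H_2)=\chi(H_1)+\chi(H_2)$; the inductive hypothesis again yields $\omega=\chi$. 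Applying this to every induced subgraph (itself a cograph) shows cographs are perfect.

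Since the theorem is classical, there is no genuine obstacle; the only care needed lies in invoking the correct structural facts — the existence of a perfect elimination ordering for chordal graphs (Dirac) and the $\cup/{+}$ cotree decomposition for cographs — together with the elementary observation that all three classes are hereditary. Among the three, the cograph step is the most substantive, the crucial point being that both $\omega$ and $\chi$ behave additively under the join operation.
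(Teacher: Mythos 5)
Your three arguments are all correct, but note that the paper does not prove this statement at all: Theorem~\ref{thm22} is quoted from the survey of Hougardy \cite{Hou} as a known fact, so there is no in-paper proof to compare against. What you have supplied are the standard self-contained textbook proofs, and they hold up. The bipartite case is immediate as you say. For chordal graphs, the greedy colouring along the reverse of a perfect elimination ordering is sound: when $v_i$ is coloured, its already-coloured neighbours lie in the clique $N(v_i)\cap\{v_{i+1},\ldots,v_m\}$, which together with $v_i$ has size at most $\omega(H)$, so $\omega(H)$ colours suffice; combined with Dirac's existence of simplicial vertices and the hereditariness of chordality, this gives perfection. For cographs, the additivity of both $\omega$ and $\chi$ under join and their behaviour as maxima under disjoint union, together with the cotree decomposition, carries the induction. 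The one logical hinge you rely on throughout --- that weak perfection of every member of a hereditary class yields perfection of the class --- is stated and used correctly. In short: the proposal is a valid, more self-contained route than the paper's (which is simply a citation), at the cost of importing Dirac's theorem and the cotree decomposition as external structural facts.
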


The following corollary is an immediate consequence of Proposition~\ref{prop:chord} and Theorem~\ref{thm22}.

\begin{corollary}
 Let $G=G_n\langle t, 2t,\ldots, kt\rangle$. Then $\chi(G)=k+1$.
 \end{corollary}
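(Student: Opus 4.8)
The plan is to obtain $\chi(G)=k+1$ by sandwiching $\chi$ between $\omega$ and $k+1$ using perfection, rather than constructing an explicit optimal colouring. The two inputs are the chordality result of Proposition~\ref{prop:chord} and the clique-number computation of Theorem~\ref{thm:toecliquesize}.

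First I would note that $G=G_n\langle t,2t,\ldots,kt\rangle$ is chordal by Proposition~\ref{prop:chord}, hence perfect by Theorem~\ref{thm22}; applying the defining property of a perfect graph to $G$ itself gives $\chi(G)=\omega(G)$. Next I would evaluate $\omega(G)$: the defining set is $B(G)=\{t,2t,\ldots,kt\}$, so with $t_i=it$ we have $t_i=it_1$ for every $i\in[k]$, and therefore Theorem~\ref{thm:toecliquesize} gives $\omega(G)=k+1$. (The notation $G_n\langle t,2t,\ldots,kt\rangle$ presupposes $kt<n$, so $\{1,1+t,\ldots,1+kt\}$ is an honest $(k+1)$-clique of $G$, which is precisely what makes the equality clause of Theorem~\ref{thm:toecliquesize} applicable.) Chaining the two equalities yields $\chi(G)=\omega(G)=k+1$, as claimed.

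There is no real obstacle here, since every ingredient has already been proved earlier in the paper. The only point deserving a moment's care is that one needs the \emph{equality} $\omega(G)=k+1$, not merely the upper bound $\omega(G)\le k+1$; this is supplied by the ``furthermore'' part of Theorem~\ref{thm:toecliquesize} together with the observation $t_i=it_1$, so the corollary follows at once.
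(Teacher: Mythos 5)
Your proof is correct and follows essentially the same route as the paper's: chordality (Proposition~\ref{prop:chord}) gives perfection via Theorem~\ref{thm22}, hence $\chi(G)=\omega(G)$, and the clique number is $k+1$. The only cosmetic difference is that you invoke the equality clause of Theorem~\ref{thm:toecliquesize} directly for $\omega(G)=k+1$, whereas the paper cites Theorem~\ref{thm:chordequiv}; your citation is if anything slightly cleaner, since it sidesteps the hypothesis $n\ge t_{k-1}+t_k$ appearing in the statement of Theorem~\ref{thm:chordequiv}.
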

 \begin{proof}
 By Proposition~\ref{prop:chord}, $G=G_n\langle t, 2t,\ldots,  kt\rangle$ is chordal, and thus, by Theorem~\ref{thm22}, it is a
 perfect graph. Then, by Theorem \ref{thm:chordequiv}, $\chi(G)=\omega(G)=k+1.$
 \end{proof}

  Next, we characterize perfect Toeplitz graphs $G_n\langle t_1, t_2\rangle$. To do so, we introduce the following Theorem.
  A graph $G$ is called a {\it Berge graph} if it contains neither an odd hole nor an odd anti-hole as an induced subgraph.

  \begin{theorem} {\rm (Strong Perfect Graph Theorem \cite{CRST})} \label{thm21} A graph is perfect if and only if it is Berge.
 \end{theorem}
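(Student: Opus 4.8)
The statement is the Strong Perfect Graph Theorem, the landmark result of Chudnovsky, Robertson, Seymour and Thomas; the plan is not to reprove it — in this paper it is invoked purely as a black box — but to indicate the shape of the argument. The ``perfect $\Rightarrow$ Berge'' direction is short: perfection passes to induced subgraphs, so it suffices to observe that no odd hole and no odd anti-hole is perfect. For the odd hole $C_{2\ell+1}$ with $\ell\ge 2$ one has $\omega=2$ but $\chi=3$, and for its complement one has $\omega=\ell$ but $\chi=\ell+1$; hence a perfect graph can contain neither as an induced subgraph, i.e.\ it is Berge.

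The content lies in the converse, that every Berge graph is perfect. Following CRST, I would argue by contradiction from a \emph{minimal imperfect Berge graph} $G$ — one that is Berge and not perfect but all of whose proper induced subgraphs are perfect — together with a structural decomposition theorem for Berge graphs. That theorem asserts that every Berge graph is either \emph{basic} (bipartite, the line graph of a bipartite graph, a double split graph, or the complement of one of these) or admits one of a short list of structural decompositions (a proper $2$-join in $G$ or in $\overline{G}$, or a balanced skew partition, with homogeneous pairs appearing in some versions). One then checks two things: every basic graph is perfect — bipartite graphs and line graphs of bipartite graphs by K\"{o}nig's theorem, double split graphs directly, and the complements via Lov\'{a}sz's Perfect Graph Theorem — and none of the listed decompositions can occur in a minimal imperfect Berge graph, the key inputs here being that a minimal imperfect graph has no balanced skew partition and that the $2$-join and homogeneous-pair cases can likewise be excluded. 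Together these force $G$ to be basic, hence perfect, a contradiction.

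The overwhelming obstacle is the decomposition theorem itself: proving that a Berge graph without a balanced skew partition, without a proper $2$-join in $G$ or in $\overline{G}$, and without a homogeneous pair must be basic. This is the roughly $150$-page technical heart of CRST, built around a delicate analysis of how ``prisms'', ``wheels'' and long induced cycles embed in Berge graphs, and reproducing it is out of scope. We therefore simply cite Theorem~\ref{thm21} and apply it in the next section, reducing perfection of $G_n\langle t_1,t_2\rangle$ to the far more tractable question of which such graphs contain an odd hole, to be analysed via Lemma~\ref{lem:k2cycle}.
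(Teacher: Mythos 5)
The paper offers no proof of this statement: it is the Strong Perfect Graph Theorem, cited from Chudnovsky--Robertson--Seymour--Thomas and used as a black box, which is exactly how you treat it. Your sketch of the easy direction (odd holes and odd anti-holes are imperfect, and perfection is hereditary) and your outline of the decomposition-based converse are accurate, so your proposal is consistent with the paper's approach.
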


  \begin{theorem} \label{theorem:perfectgraph} Let $G=G_n\langle t_1, t_2\rangle$ with $n \ge t_1+t_2$ and $d=\gcd(t_1,t_2)$. Then the following statements are equivalent.
  \begin{itemize}
  \item[(i)] $(t_1+t_2)/d$ is even or $(t_1+t_2)/d=3$.
  \item[(ii)] $G$ is an odd-hole-free graph.
  \item[(iii)] $G$ is a perfect graph.
  \item[(iv)] $G$ is a weakly perfect graph.
  \end{itemize}
  \end{theorem}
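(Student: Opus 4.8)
The plan is to establish the cycle $\mathrm{(i)}\Rightarrow\mathrm{(iii)}\Rightarrow\mathrm{(ii)}\Rightarrow\mathrm{(i)}$ together with the implications $\mathrm{(iii)}\Rightarrow\mathrm{(iv)}\Rightarrow\mathrm{(i)}$; since the resulting digraph on $\{\mathrm{(i)},\mathrm{(ii)},\mathrm{(iii)},\mathrm{(iv)}\}$ is strongly connected, this yields the full equivalence. Two of these implications are essentially free: $\mathrm{(iii)}\Rightarrow\mathrm{(iv)}$ because every perfect graph is weakly perfect, and $\mathrm{(iii)}\Rightarrow\mathrm{(ii)}$ because an odd hole $C_{2m+1}$ with $m\ge 2$ has $\omega=2\ne 3=\chi$ and therefore cannot occur as an induced subgraph of a perfect graph (alternatively, invoke Theorem~\ref{thm21}). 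So the content of the theorem lies in $\mathrm{(i)}\Rightarrow\mathrm{(iii)}$ and in the two converse directions.

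For $\mathrm{(i)}\Rightarrow\mathrm{(iii)}$, write $t_1=ad$ and $t_2=bd$ with $a<b$ and $\gcd(a,b)=1$, so that $(t_1+t_2)/d=a+b$. If $a+b=3$ then $(a,b)=(1,2)$, hence $t_2=2t_1$, and $G=G_n\langle t_1,2t_1\rangle$ is chordal by Proposition~\ref{prop:chord}, hence perfect by Theorem~\ref{thm22}. If $a+b$ is even, then coprimality forces $a$ and $b$ both odd; since $d$ divides $t_1$ and $t_2$, any edge of $G$ joins two vertices in the same residue class modulo $d$, and writing each vertex $i$ as $i=dq_i+r_i$ with $0\le r_i<d$, an edge $ij$ forces $q_i-q_j\in\{\pm a,\pm b\}$, which is odd. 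Hence $i\mapsto q_i\bmod 2$ is a proper $2$-coloring, so $G$ is bipartite and therefore perfect by Theorem~\ref{thm22}.

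For the converse directions, suppose $\mathrm{(i)}$ fails, so $(t_1+t_2)/d$ is odd and not equal to $3$; since $t_2=2t_1$ would give $(t_1+t_2)/d=3$, we have $t_2\ne 2t_1$, and then $(t_1+t_2)/d\ge 5$. By Lemma~\ref{lem:k2cycle} the graph $G$ contains a hole of odd length $(t_1+t_2)/d\ge 5$, so $G$ is not odd-hole-free, which proves $\mathrm{(ii)}\Rightarrow\mathrm{(i)}$. For $\mathrm{(iv)}\Rightarrow\mathrm{(i)}$, this same hole is an odd cycle, so $G$ is not bipartite and $\chi(G)\ge 3$; on the other hand $t_2\ne 2t_1$ and Theorem~\ref{thm15} give $\omega(G)\le 2$, while $n\ge t_1+t_2>t_1+1$ makes $1$ and $1+t_1$ adjacent, so $\omega(G)=2<\chi(G)$ and $G$ is not weakly perfect.

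The only step that requires real thought is the bipartiteness argument for $\mathrm{(i)}\Rightarrow\mathrm{(iii)}$ when $(t_1+t_2)/d$ is even; everything else is bookkeeping around Lemma~\ref{lem:k2cycle}, Theorem~\ref{thm15}, and the known perfection of chordal and bipartite graphs. The main points to handle carefully are verifying that the implication digraph is strongly connected, so that all four statements are genuinely equivalent, and tracking the small cases (notably $t_2=2t_1$ and the minimal admissible values of $t_1,t_2$) where the hypothesis $n\ge t_1+t_2$ is exactly what is needed to guarantee the relevant edge and hole exist.
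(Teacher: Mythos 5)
Your proof is correct, and the key implication is handled by a genuinely different and lighter argument than the paper's. The paper proves $\mathrm{(i)}\Rightarrow\mathrm{(iii)}$ by establishing that $G$ is Berge --- first an odd-hole-freeness argument that counts, around an alleged odd hole, the numbers $a_j,b_j$ of forward and backward steps of size $t_j$ and derives an even length $2b_1+2a_2+\alpha(k_1+k_2)$, then a separate check that odd anti-holes are excluded (via $\overline{C_5}=C_5$ and $\omega(G)\le 2$) --- and finally invokes the Strong Perfect Graph Theorem to pass from Berge to perfect. You instead observe that when $(t_1+t_2)/d$ is even, coprimality forces $t_1/d$ and $t_2/d$ both odd, so the parity of $\lfloor i/d\rfloor$ is a proper $2$-coloring and $G$ is outright bipartite; perfection then follows from the elementary fact that bipartite graphs are perfect (Theorem~\ref{thm22}), with no appeal to the SPGT anywhere in your proof (your $\mathrm{(iii)}\Rightarrow\mathrm{(ii)}$ uses only $\omega(C_{2m+1})=2\ne 3=\chi(C_{2m+1})$). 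This buys a more elementary proof and a stronger structural conclusion (bipartiteness, which also yields odd-hole-freeness for free); the paper's counting argument is what you would fall back on if you only wanted odd-hole-freeness without identifying the bipartition. Your remaining implications --- $\mathrm{(ii)}\Rightarrow\mathrm{(i)}$ and $\mathrm{(iv)}\Rightarrow\mathrm{(i)}$ by contraposition through Lemma~\ref{lem:k2cycle} and Theorem~\ref{thm15} --- match the paper's use of those results, merely rerouted (the paper proves $\mathrm{(iv)}\Rightarrow\mathrm{(ii)}$ instead of $\mathrm{(iv)}\Rightarrow\mathrm{(i)}$), and your implication digraph is indeed strongly connected, so the equivalence is complete.
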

  \begin{proof}

 Let $k_1 = t_1/d$ and $k_2 = t_2/d$.
 We first show $\mbox{(ii)}\Rightarrow\mbox{(i)}$. Since $G$ has a hole of length $k_1+k_2$ by Lemma~\ref{lem:k2cycle}, $G$ is an odd-hole-free graph only if $k_1+k_2$ is even or $k_1+k_2=3$.

 Now we show $\mbox{(i)}\Rightarrow\mbox{(ii)}$.  Suppose that $k_1+k_2 = 3$. Since $t_1 < t_2$,  $k_1=1$ and $k_2 = 2$ and so $t_2 = 2t_1$. Therefore $G$ is chordal by Proposition~\ref{prop:chord} and thus $G$ is odd-hole-free by Theorem~\ref{thm22}.
  Suppose that $k_1+k_2$ is even. We prove by contradiction. Suppose that $G$ has an odd-hole $C$ of length $\ell$ for some positive odd integer $\ell \ge 5$.
 Let $C= v_1 v_2 \ldots v_{\ell} v_1$. Then, by Proposition~\ref{prop:adj}, $|v_{i+1}-v_i| \in \{t_1,t_2\}$ for each $i \in [\ell]$ (we identify $v_1$ with $v_{\ell+1}$).
 For each $j \in \{1,2\}$, let $a_j$ be the number of indices $i$ such that $v_{i+1}-v_i = t_j$, and let $b_j$ be the number of indices $i$ such that $v_{i}-v_{i+1} = t_j$ for  each $i \in [\ell]$.
 Then the length of $C$ is $a_1+b_1+a_2+b_2$.
 Since $v_1 = v_{\ell+1}$, \begin{align*}
 0 &= (v_{\ell+1} - v_{\ell})+(v_{\ell} - v_{\ell-1}) +\cdots + (v_2 - v_1) + (v_1 - v_{\ell+1}) \\
  &= a_1t_1+a_2t_2-b_1t_1-b_2t_2,\end{align*}
   or $(a_1-b_1)t_1 = (b_2-a_2)t_2$.
 If $a_2=b_2$, then $b_1=a_1$ and so the length of $C$ is $2(a_1+a_2)$, which is a contradiction.
Thus $a_2 \ne b_2$. Then \[\frac{k_2}{k_1} = \frac{t_2}{t_1} = \frac{a_1-b_1}{b_2-a_2}.\] Therefore $\alpha k_2 = a_1-b_1 $ and  $\alpha k_1 = b_2-a_2$ for some integer $\alpha$. Then the length of $C$ is $2b_1+2a_2+\alpha(k_1+k_2)$. Since $k_1+k_2$ is even by the supposition, we reach a contradiction. Thus $G$ is odd-hole-free. Hence we have shown that $\mbox{(i)}\Leftrightarrow\mbox{(ii)}$.

By Theorem~\ref{thm21}, $\mbox{(iii)}\Rightarrow\mbox{(ii)}$. Next, we will show $\mbox{(i)}\Rightarrow\mbox{(iii)}$.
 Suppose that $k_1+k_2$ is even or $k_1+k_2=3$. If $k_2=2k_1$, then $G$ is chordal by Proposition~\ref{prop:chord} and thus $G$ is a perfect graph by Theorem~\ref{thm22}.
 Suppose that $k_2 \ne 2k_1$. Then $k_1+k_2 \ne 3$, so $k_1+k_2$ is even.
 In addition, $G$ is not chordal by Theorem~\ref{thm:chordequiv}, so $\omega(G) \le 2$ by Theorem~\ref{thm15}. Since we have shown $\mbox{(i)}\Rightarrow\mbox{(ii)}$, $G$ is an odd-hole-free graph.
  By Theorem~\ref{thm21}, it remains to show that $G$ does not contain an odd anti-hole.
 Since the complement of a cycle $C_5$ is $C_5$ again, $G$ does not contain an anti-hole on $5$ vertices.
  Note that any odd anti-hole with at least $7$ vertices contains a triangle. Yet, since $\omega(G) \le 2$, $G$ does not contain a triangle. Therefore $G$ does not contain any odd anti-hole with at least $7$ vertices and so we have shown that $G$ is odd anti-hole-free.

  Obviously $\mbox{(iii)}\Rightarrow\mbox{(iv)}$.
  To complete the proof, we will show $\mbox{(iv)}\Rightarrow\mbox{(ii)}$.
 Suppose that $G$ is a weakly perfect graph.
 Then $\chi(G) = \omega(G)$ by definition.
 By Theorem~\ref{thm15}, $\omega(G) \le 3$ and the equality holds if and only if $G$ is chordal.
 If $\omega(G) = 3$, then $G$ is chordal and so, by Theorem~\ref{thm22}, $G$ is a perfect graph. Since we have shown $\mbox{(iii)}\Rightarrow\mbox{(ii)}$, $G$ is odd-hole-free if $\omega(G) = 3$.
 Suppose that $\omega(G)=2$. Then $\chi(G)=2$, so $G$ is a bipartite graph, which implies that $G$ is odd-hole-free.   \end{proof}

 \begin{theorem} {\rm (Weak Perfect Graph Theorem \cite{Lov})} \label{lem:WPGT} A graph is perfect if and only if its complement is perfect.
 \end{theorem}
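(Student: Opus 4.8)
This is the Weak Perfect Graph Theorem, and the plan is to derive it from the sharper statement that every perfect graph $G$ satisfies $\theta_v(G)=\alpha(G)$, where $\alpha(G)$ denotes the independence number. Since $\overline{\overline{G}}=G$, it suffices to prove one direction: if $G$ is perfect then $\overline{G}$ is perfect. Now $\overline{G}$ is perfect exactly when $\omega(\overline{H})=\chi(\overline{H})$ for every induced subgraph $\overline{H}$ of $\overline{G}$; using $\omega(\overline{H})=\alpha(H)$, the identity $\chi(\overline{H})=\theta_v(H)$ from~\eqref{equ3}, and the fact that every induced subgraph of a perfect graph is perfect, this reduces to proving $\theta_v(G)=\alpha(G)$ for every perfect $G$.

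The inequality $\theta_v(G)\ge\alpha(G)$ is immediate, since the vertices of a maximum independent set lie in pairwise distinct cliques of any clique cover. For $\theta_v(G)\le\alpha(G)$ I would induct on $|V(G)|$, the engine being the claim that \emph{every perfect graph $G$ has a clique $K$ meeting every maximum independent set of $G$.} Granting the claim, $G-V(K)$ is a perfect induced subgraph with $\alpha(G-V(K))=\alpha(G)-1$ (every maximum independent set of $G$ meets $K$, and a clique meets an independent set in at most one vertex), so induction gives a clique cover of $G-V(K)$ of size $\alpha(G)-1$, and adjoining $K$ yields one of $G$ of size $\alpha(G)$.

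The claim is where the work lies, and I would establish it via the Replication Lemma: if $G$ is perfect and $v\in V(G)$, then replacing $v$ by a clique of arbitrary size $h(v)\ge 0$ — each new vertex keeping the same outside neighbours as $v$, with $h(v)=0$ meaning ``delete $v$'' — yields a perfect graph. This follows by iterating the case of a single duplication $G^{v}$, for which, by heredity, it suffices to check $\chi(G^{v})=\omega(G^{v})$ (the other induced subgraphs of $G^{v}$ are perfect graphs or duplications of smaller perfect graphs, disposed of by induction on order): if $v$ lies in a maximum clique then $\omega(G^{v})=\omega(G)+1$ and the new twin takes a fresh colour; otherwise, in an optimal colouring of $G$ delete all vertices of $v$'s colour class except $v$ — each maximum clique met that class in a vertex other than $v$, so the clique number of the resulting (perfect) graph drops, a colouring of it uses one fewer colour, and the freed colour goes on the independent set consisting of the deleted vertices together with the twin. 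Given the lemma, suppose for contradiction that every clique of $G$ avoids some maximum independent set. Let $A_1,\dots,A_t$ be all the maximum independent sets, put $h(v)=|\{\,j: v\in A_j\,\}|$, and let $G_h$ be $G$ with each $v$ blown up into a clique of size $h(v)$. Then $G_h$ is perfect, $\alpha(G_h)=\alpha(G)=:\alpha$, and $|V(G_h)|=\sum_{j}|A_j|=t\alpha$, while for every clique $K$ of $G$ one has $\sum_{v\in K}h(v)=\sum_{j}|K\cap A_j|\le t-1$ since some $A_j$ misses $K$; hence $\omega(G_h)\le t-1$. Perfection of $G_h$ then forces $t\alpha=|V(G_h)|\le\chi(G_h)\,\alpha(G_h)=\omega(G_h)\,\alpha\le(t-1)\alpha$, a contradiction. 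This proves the claim, and the theorem follows.

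The main obstacle is the second case of the Replication Lemma, where the duplicated vertex lies in no maximum clique: showing that deleting the rest of its colour class strictly decreases the clique number is the one genuinely structural step, and it is precisely where perfection of the proper induced subgraphs of $G$ is used; the rest — the reduction through~\eqref{equ3}, the outer induction, and the counting with $G_h$ — is bookkeeping. An alternative route I would keep in reserve bypasses replication altogether: the property ``$\alpha(H)\,\omega(H)\ge|V(H)|$ for every induced subgraph $H$'' is manifestly complement-invariant and, by a short rank argument over $\mathbb{R}$ applied to a minimal imperfect graph, is equivalent to perfection; this is quicker to state but the replication argument above is more self-contained.
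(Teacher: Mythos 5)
The paper does not prove this statement: it is quoted as the Weak Perfect Graph Theorem with a citation to Lov\'asz \cite{Lov}, so there is no internal proof to compare against. Your argument is correct and is essentially Lov\'asz's original proof from that reference: the reduction of perfection of $\overline{G}$ to the identity $\theta_v(H)=\alpha(H)$ for all induced subgraphs $H$ via \eqref{equ3} and $\omega(\overline{H})=\alpha(H)$; the Replication Lemma proved by iterating a single duplication, split according to whether the duplicated vertex lies in a maximum clique (the second case, deleting the rest of its colour class to drop the clique number, is indeed the only structural step); and the final counting contradiction $t\alpha=|V(G_h)|\le\chi(G_h)\,\alpha(G_h)=\omega(G_h)\,\alpha\le(t-1)\alpha$. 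Two points are worth stating slightly more carefully, though neither is a gap: in the outer induction you only need $\alpha(G-V(K))\le\alpha(G)-1$ (equality does hold, but the inequality suffices and avoids any fuss), and in the count you only need $\alpha(G_h)\le\alpha(G)$, which follows because an independent set of $G_h$ meets each blown-up clique at most once and projects to an independent set of $G$. The rank-argument alternative you hold in reserve is also sound and is the basis of Lov\'asz's companion characterization of perfection by $\alpha(H)\,\omega(H)\ge|V(H)|$.
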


By definition, it is easy to check that the complement of a Toeplitz graph $G_n\langle s_1, s_2\rangle$ is $G_n\langle t_1, t_2,\ldots,t_{n-3}\rangle$ where $\{t_1,\ldots,t_{n-3}\} = [n-1]\setminus \{s_1,s_2\}$.
Thus, by Theorems~\ref{theorem:perfectgraph} and \ref{lem:WPGT}, the following corollary is true.

 \begin{corollary} Let $G=G_n\langle t_1,\ldots,t_{n-3}\rangle$ and $\{s_1,s_2\} = [n-1]\setminus \{t_1, \ldots, t_{n-3}\}$ with $s_1+s_2 \le n$. Then $G$ is a perfect graph if and only if $(s_1+s_2)/\gcd(s_1,s_2)$ is even or $(s_1+s_2)/\gcd(s_1,s_2)=3$.
  \end{corollary}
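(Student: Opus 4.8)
The plan is to obtain this corollary as a direct consequence of the characterization of perfect two-stripe Toeplitz graphs (Theorem~\ref{theorem:perfectgraph}) together with the Weak Perfect Graph Theorem (Theorem~\ref{lem:WPGT}), via the complement identity already recorded just before the statement.

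First I would set $H := G_n\langle s_1, s_2\rangle$, assuming without loss of generality that $s_1 < s_2$; this is legitimate since $\{s_1,s_2\} = [n-1]\setminus\{t_1,\dots,t_{n-3}\}$ has exactly two elements, so $s_1 \ne s_2$, and moreover $1 \le s_1 < s_2 \le n-1$, so $H$ is a genuine Toeplitz graph. Next I would check the complement identity: if $A$ denotes the adjacency matrix of $G_n\langle s_1,s_2\rangle$, then the adjacency matrix of the complement is $J - I - A$, which is again a $(0,1)$-symmetric Toeplitz matrix whose first row has a $1$ in precisely those off-diagonal positions not occupied by $A$. Hence $\overline{H} = G_n\langle t_1,\dots,t_{n-3}\rangle = G$, where $\{t_1,\dots,t_{n-3}\} = [n-1]\setminus\{s_1,s_2\}$, matching the hypothesis of the corollary.

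Then the argument closes quickly. By Theorem~\ref{lem:WPGT}, $G$ is perfect if and only if $\overline{G} = H$ is perfect. Since $s_1 < s_2$ and $s_1 + s_2 \le n$, the hypothesis of Theorem~\ref{theorem:perfectgraph} is met for $H = G_n\langle s_1, s_2\rangle$, so $H$ is perfect if and only if $(s_1+s_2)/\gcd(s_1,s_2)$ is even or equals $3$. Combining the two equivalences yields the claim.

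I do not anticipate a genuine obstacle here: the substantive content lives entirely in Theorem~\ref{theorem:perfectgraph}, and the only thing requiring any care is the (routine) verification that the complement of a two-stripe Toeplitz graph is itself a Toeplitz graph with the complementary stripe set, plus the harmless reduction to the case $s_1 < s_2$.
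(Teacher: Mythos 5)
Your proposal is correct and follows exactly the paper's route: the paper likewise derives the corollary from Theorem~\ref{theorem:perfectgraph} applied to $G_n\langle s_1,s_2\rangle$ (whose hypothesis $n \ge s_1+s_2$ is supplied by the assumption $s_1+s_2\le n$) combined with the Weak Perfect Graph Theorem and the observation that the complement of $G_n\langle s_1,s_2\rangle$ is $G_n\langle t_1,\dots,t_{n-3}\rangle$. Your explicit verification of the complement identity via $J-I-A$ is just a slightly more detailed write-up of what the paper dismisses as ``easy to check.''
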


 \section{Degree sequence of Toeplitz graphs}
\label{sec:degree}
The {\it degree sequence} of a graph is defined to be a monotonic nonincreasing sequence of the vertex degrees of its graph vertices.
  In this section, we characterize the degree sequence $(d_1,d_2,\ldots,d_n)$ of a Toeplitz graph with $n$ vertices and show that a Toeplitz graph is a regular graph if and only if it is a circulant graph.

  We recall that $B(G) = \{t_1,\ldots,t_k\}$ for $G:=G_n\langle t_1, \ldots, t_k\rangle$.
  For a Toeplitz graph $G = G_n \langle t_1,\ldots, t_k\rangle$, we denote by $\ell_G(i)$ the number of elements in $\{t_1,\ldots,t_k\}$ which are less than $i$.
Then it is easy to see that \begin{equation}\label{eq:ellj} \ell_G(j+1)-\ell_G(j) = 1 \Leftrightarrow j \in B(G)\end{equation}
for any $j \in [n-1]$.

\begin{theorem}\label{thm:ineqdeg}
 Let $G = G_n \langle t_1,\ldots, t_k\rangle$. Then for each $i \in [n]$, \[
 \deg(i) = \ell_G(i) + \ell_G(n-i+1).\] \end{theorem}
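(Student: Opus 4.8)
The plan is to count the neighbors of a fixed vertex $i$ directly, splitting them into those with smaller label and those with larger label. By Proposition~\ref{prop:adj}, a vertex $j$ is adjacent to $i$ precisely when $|i-j| \in B(G) = \{t_1,\ldots,t_k\}$. So I would write $\deg(i) = |\{j \in [n] : j < i,\ i-j \in B(G)\}| + |\{j \in [n] : j > i,\ j-i \in B(G)\}|$, since the two sets are disjoint and exhaust $N(i)$.

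For the first set, as $j$ ranges over $\{1,\ldots,i-1\}$, the difference $i-j$ ranges over $\{1,\ldots,i-1\}$ bijectively, so $|\{j < i : i - j \in B(G)\}|$ equals the number of elements of $B(G)$ lying in $\{1,\ldots,i-1\}$, which is exactly the number of elements of $\{t_1,\ldots,t_k\}$ that are less than $i$ — that is, $\ell_G(i)$ by definition. Similarly, for the second set, as $j$ ranges over $\{i+1,\ldots,n\}$, the difference $j-i$ ranges over $\{1,\ldots,n-i\}$ bijectively, so $|\{j > i : j - i \in B(G)\}|$ equals the number of elements of $\{t_1,\ldots,t_k\}$ that are less than or equal to $n-i$, equivalently less than $n-i+1$, which is $\ell_G(n-i+1)$. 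Adding the two counts gives $\deg(i) = \ell_G(i) + \ell_G(n-i+1)$.

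There is essentially no obstacle here; the only point requiring a little care is the off-by-one in the second term: one must note that $j - i$ can equal $n - i$ (when $j = n$), so the relevant count is of elements of $B(G)$ that are $\le n-i$, and since all $t_\ell \le n-1 < n$ are integers this is the same as the count of elements strictly less than $n-i+1$, matching the definition of $\ell_G(n-i+1)$. I would also remark that since $B(G) \subseteq [n-1]$, all elements of $B(G)$ are indeed $\le n-1$, so the counts behave as expected at the extreme cases $i = 1$ and $i = n$ (where one of the two terms is $\ell_G(1) = 0$ or $\ell_G(n)$, and the other is $\ell_G(n)$ or $\ell_G(1)=0$, correctly giving $\deg(1) = \deg(n) = k$).
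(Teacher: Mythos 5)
Your proof is correct and follows essentially the same route as the paper's: both decompose $\deg(i)$ into neighbors with smaller and larger labels and count which offsets $t_j$ keep $i-t_j \ge 1$ (giving $\ell_G(i)$) and $i+t_j \le n$ (giving $\ell_G(n-i+1)$). The paper phrases this as $|\{i-t_1,\ldots,i-t_k\}\cap[n]| + |\{i+t_1,\ldots,i+t_k\}\cap[n]|$, but the counting is identical, including your careful handling of the off-by-one in the second term.
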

 \begin{proof}
  Take a vertex $i \in [n]$. Then \begin{align*}
  \deg(i) &= | \{i-t_1,\ldots, i-t_k,i+t_1,\ldots,i+t_k\} \cap [n] | \\
         &= |\{i-t_1,\ldots, i-t_k\} \cap [n]| + |\{i+t_1,\ldots,i+t_k\} \cap [n]|. \end{align*}
  By definition of $\ell_G(i)$, \[\{i-t_1,\ldots,i-t_k\} \cap [n] = \{i-t_1,\ldots,i-t_{\ell_G(i)}\}\]
   and \[
   \{i+t_1,\ldots,i+t_k\} \cap [n] = \{i+t_1,\ldots,i+t_{\ell_G(n-i+1)}\},\]
  where the right sides of the first and second equalities are $\emptyset$ if $\ell_G(i) = 0$ or $\ell_G(n-i+1)=0$, respectively.
 \end{proof}

\begin{corollary}\label{cor:deg1}
For a Toeplitz graph on $n$ vertices, the following are true:
\begin{itemize}
\item[(a)] For each  $j \in [n]$, $\deg(j)= \deg(n-j+1)$.
\item[(b)] If $n$ is odd, then $\deg\left({\frac{n+1}{2}}\right)$ is even.
\end{itemize}
\end{corollary}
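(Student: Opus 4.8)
The plan is to obtain both parts as immediate consequences of Theorem~\ref{thm:ineqdeg}, which gives $\deg(i) = \ell_G(i) + \ell_G(n-i+1)$ for every $i \in [n]$. No auxiliary lemma is needed; the work is just bookkeeping with the index $n-i+1$.

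For part (a), I would apply the formula twice. Applied to the vertex $j$ it gives $\deg(j) = \ell_G(j) + \ell_G(n-j+1)$, and applied to the vertex $n-j+1$ it gives $\deg(n-j+1) = \ell_G(n-j+1) + \ell_G\bigl(n-(n-j+1)+1\bigr) = \ell_G(n-j+1) + \ell_G(j)$. The two right-hand sides are equal, so $\deg(j) = \deg(n-j+1)$. In particular the degree sequence, read along the vertices $1,2,\ldots,n$, is a palindrome.

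For part (b), I would note that when $n$ is odd the midpoint vertex is $j = \tfrac{n+1}{2}$, and here $n - j + 1 = n - \tfrac{n+1}{2} + 1 = \tfrac{n+1}{2} = j$. Substituting into Theorem~\ref{thm:ineqdeg} therefore yields $\deg(j) = \ell_G(j) + \ell_G(j) = 2\,\ell_G(j)$, which is even. (One can also see part (b) as the degenerate case of the symmetry in part (a): the involution $j \mapsto n-j+1$ on $[n]$ fixes exactly the midpoint when $n$ is odd, and a vertex fixed by this degree-preserving symmetry must have degree counted twice over by Theorem~\ref{thm:ineqdeg}.)

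There is no genuine obstacle here: the only point requiring any care is the arithmetic identity $n-j+1 = j$ at the centre, which holds precisely because $n$ is odd, and the fact that $\ell_G$ takes nonnegative integer values so that $2\,\ell_G(j)$ is even.
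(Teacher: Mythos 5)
Your proof is correct and is exactly the argument the paper intends: the corollary is stated as an immediate consequence of Theorem~\ref{thm:ineqdeg} (the paper omits the details), and your two substitutions $i=j$, $i=n-j+1$ for part (a) and the observation that $n-j+1=j$ at the midpoint for part (b) are the whole content. Nothing to add.
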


\begin{lemma}\label{lem:deg}
The difference of degrees of two consecutive vertices of a Toeplitz graph is at most $1$. Moreover,  for  $G:=G_n\langle t_1, \ldots, t_k\rangle$ and each vertex $j \in [n-1]$, the following are true: \begin{itemize}
 \item[(a)] $\deg(j)=\deg(j+1)$ if and only if $\{j, n-j\}\subseteq B(G)$ or $\{j, n-j\}\subseteq [n-1]\backslash  B(G)$.
  \item[(b)] $\deg(j)+1=\deg(j+1)$ if and only if $j\in B(G)$ and  $n-j \notin B(G)$.
  \item[(c)] $\deg(j)-1= \deg(j+1)$ if and only if $n-j\in B(G)$ and $j \notin B(G)$.
  \end{itemize}
\end{lemma}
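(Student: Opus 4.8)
The plan is to derive everything from the closed formula $\deg(i) = \ell_G(i) + \ell_G(n-i+1)$ of Theorem~\ref{thm:ineqdeg} together with the increment rule \eqref{eq:ellj}. Fix $j \in [n-1]$. Subtracting the formula for $\deg(j)$ from that for $\deg(j+1)$ and regrouping the four terms gives
\[
\deg(j+1) - \deg(j) = \bigl(\ell_G(j+1) - \ell_G(j)\bigr) - \bigl(\ell_G(n-j+1) - \ell_G(n-j)\bigr).
\]
Since $j \in [n-1]$, also $n-j \in [n-1]$, so \eqref{eq:ellj} applies to both parenthesized differences: the first equals $1$ if $j \in B(G)$ and $0$ otherwise, and the second equals $1$ if $n-j \in B(G)$ and $0$ otherwise. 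Hence $\deg(j+1) - \deg(j)$ is a difference of two quantities each lying in $\{0,1\}$, so it lies in $\{-1,0,1\}$; this proves the first assertion.

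For the three equivalences I would simply read off the case analysis from the displayed identity. The value $0$ occurs exactly when the two indicator terms agree, i.e.\ when $j \in B(G)$ and $n-j \in B(G)$, or when $j \notin B(G)$ and $n-j \notin B(G)$; as $j$ and $n-j$ both lie in $[n-1]$, this is precisely the condition $\{j,n-j\} \subseteq B(G)$ or $\{j,n-j\} \subseteq [n-1]\setminus B(G)$ of~(a). The value $+1$, that is $\deg(j)+1 = \deg(j+1)$, occurs exactly when $j \in B(G)$ and $n-j \notin B(G)$, which is~(b); the value $-1$, that is $\deg(j)-1 = \deg(j+1)$, occurs exactly when $n-j \in B(G)$ and $j \notin B(G)$, which is~(c).

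There is essentially no obstacle here beyond bookkeeping: the only point requiring a moment's care is checking that \eqref{eq:ellj} legitimately applies to $\ell_G(n-j+1)-\ell_G(n-j)$, i.e.\ that $n-j \in [n-1]$, which holds because $1 \le j \le n-1$. One could alternatively argue directly from Theorem~\ref{thm:ineqdeg} by tracking how the sets $\{i\pm t_1,\ldots,i\pm t_k\}\cap[n]$ change as $i$ increases by $1$, but routing through \eqref{eq:ellj} keeps the argument shortest. I would present it in the order above: first establish the identity for $\deg(j+1)-\deg(j)$, then peel off the three cases.
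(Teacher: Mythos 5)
Your proof is correct and follows essentially the same route as the paper: both decompose $\deg(j+1)-\deg(j)$ via Theorem~\ref{thm:ineqdeg} into the two increments $\ell_G(j+1)-\ell_G(j)$ and $\ell_G(n-j)-\ell_G(n-j+1)$, bound each in $\{0,1\}$ (up to sign), and read off cases (a)--(c) from \eqref{eq:ellj}. Your version merely makes the single displayed identity and the indicator-style case split slightly more explicit than the paper's.
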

\begin{proof}
Let $G = G_n \langle t_1,\ldots, t_k\rangle$. Take $j \in [n-1]$.
By definition,
\[
0 \le \ell_G(j+1)-\ell_G(j) \le 1 \ \ \mbox{and} \ \ -1 \le \ell_G(n-j) - \ell_G(n-j+1) \le 0.\]
Therefore \[
-1 \le \deg(j+1) - \deg(j) \le 1\]
 by Theorem~\ref{thm:ineqdeg}.
 To show the `moreover' part, suppose that $\deg(j+1) = \deg(j)$. Then either (i) $\ell_G(j+1)-\ell_G(j) = 1$ and $\ell_G(n-j) - \ell_G(n-j+1) = -1$ or (ii) $\ell_G(j+1)-\ell_G(j) = \ell_G(n-j) - \ell_G(n-j+1) = 0$.
Thus, by \eqref{eq:ellj}, $\{j, n-j\}\subseteq B(G)$ in the case (i) and $\{j, n-j\}\subseteq [n-1]\backslash  B(G)$ in the case (ii).
One may check (b) and (c) in a similar manner as above.
\end{proof}

From Corollary~\ref{cor:deg1}, we know that the degree sequence of a Toeplitz graph of order $n$ has the property that each term appears an even number of times, except the degree of $(n+1)/2$ which happens to be even when $n$ is odd.
In addition, the terms form consecutive integers with possible repetition by Lemma~\ref{lem:deg}.
However, this necessary condition cannot be sufficient. To see why, we take the sequence ${\bf d} = (4,3,3,2,2,1,1)$, which satisfies the necessary condition. Suppose that ${\bf d}$ is the degree sequence of a Toeplitz graph $G$. Then $\deg(1) = 1$, $\deg(2) = 2$, $\deg(3) = 3$. By Lemma~\ref{lem:deg}(b), $1 \in B(G)$ and $2 \in B(G)$. Then $1$ is adjacent to $2$ and $3$ and we reach a contradiction.{\footnote{The authors thank Homoon Ryu for finding this counterexample.}}
This counterexample is a special case of $(2m,2m-1,2m-1,2m-2,2m-2,\ldots,2,2,1,1)$ for some $m \ge 2$, which cannot be the degree sequence of any Toeplitz graph of order $4m-1$.

 The following theorem gives a necessary and sufficient condition  for a monotone nonincreasing sequence ${\bf d}_n=(d_1, d_2, \ldots, d_n)$ of nonnegative integers being the degree sequence of a Toeplitz graph.

 \begin{theorem}\label{degree:Toep}
 A monotone nonincreasing sequence ${\bf d}_n=(d_1, d_2, \ldots, d_n)$ of nonnegative integers
  is the degree sequence of a Toeplitz graph if and only if there is a permutation $\pi$ on $[n]$ such that
   \begin{itemize}
  \item[(a)] $|d_{\pi(i+1)}-d_{\pi(i)}| \le 1$  for each $i \in [n-1]$;
  \item[(b)] $d_{\pi(i)} = d_{\pi(n-i+1)}$ for each $i \in [n]$;
  \item[(c)] $s\le d_{\pi(1)} \le n-1-s$;
  \item[(d)] $d_{\pi(1)}$ and $s$ have the same parity if $n$ is odd,
    \end{itemize}
    where $s$ is the number of $i \in \{1,\ldots,\left\lfloor (n-1)/2 \right\rfloor\}$ for which $d_{\pi(i+1)}-d_{\pi(i)} \ne 0$. 
 \end{theorem}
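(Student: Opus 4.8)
The plan is to work throughout with the description of a Toeplitz graph $G$ by its parameter set $B=B(G)=\{t_1,\ldots,t_k\}\subseteq[n-1]$, and to translate all degree data into combinatorics of $B$. The two ingredients I would use are Theorem~\ref{thm:ineqdeg}, which gives $\deg(i)=\ell_G(i)+\ell_G(n-i+1)$, and~\eqref{eq:ellj}, which says $\ell_G(j+1)-\ell_G(j)$ equals $1$ if $j\in B$ and $0$ otherwise. Together these produce the two identities that drive the whole argument: $\deg(1)=\ell_G(n)=|B|=k$, and, writing $\epsilon_B(x)=1$ if $x\in B$ and $\epsilon_B(x)=0$ otherwise,
\[
\deg(i+1)-\deg(i)=\epsilon_B(i)-\epsilon_B(n-i)\qquad (i\in[n-1]).
\]
This is just Lemma~\ref{lem:deg} rephrased: the consecutive difference is $+1$ iff $i\in B$ and $n-i\notin B$, is $-1$ iff $n-i\in B$ and $i\notin B$, and is $0$ iff $i$ and $n-i$ lie on the same side of $B$.

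For necessity I would start from a Toeplitz graph $G=G_n\langle t_1,\ldots,t_k\rangle$ realizing $\mathbf{d}_n$ and pick any permutation $\pi$ of $[n]$ with $d_{\pi(i)}=\deg(i)$ for all $i$ (possible since $\mathbf{d}_n$ is the nonincreasing rearrangement of $(\deg(1),\ldots,\deg(n))$). Then (a) is Lemma~\ref{lem:deg}, (b) is Corollary~\ref{cor:deg1}(a), and $d_{\pi(1)}=\deg(1)=k=|B|$. Now $s$ counts the indices $i\in\{1,\ldots,\lfloor(n-1)/2\rfloor\}$ for which exactly one of $i,\,n-i$ lies in $B$; the pairs $\{i,n-i\}$ over these $i$ are pairwise disjoint subsets of $[n-1]$, and each such ``split'' pair puts one element into $B$ and one into $[n-1]\setminus B$, giving $s\le|B|=k$ and $s\le|[n-1]\setminus B|=n-1-k$, which is (c). If $n$ is odd these pairs partition $[n-1]$, so $|B|=2a+s$ with $a$ the number of pairs entirely inside $B$, hence $k\equiv s\pmod{2}$, which is (d).

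For sufficiency I would, given $\pi$ satisfying (a)--(d), build $B$ by reversing this analysis. Set $k=d_{\pi(1)}$ and $\delta_i=d_{\pi(i+1)}-d_{\pi(i)}$; by (a) each $\delta_i\in\{-1,0,1\}$ and by (b) $\delta_{n-i}=-\delta_i$, so $\delta_{n/2}=0$ when $n$ is even. The values $\delta_i=\pm1$ force membership: put $i\in B$ when $\delta_i=1$ and $i\notin B$ when $\delta_i=-1$; these prescriptions are consistent on the $s$ split pairs and place exactly $s$ elements into $B$. The remaining pairs $\{i,n-i\}$ with $\delta_i=0$, together with (for even $n$) the central singleton $\{n/2\}$, are free: a free pair I may take wholly into $B$ or wholly out, and the free singleton I may take or not. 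Since a free pair shifts $|B|$ by $2$ and the singleton by $1$, the attainable values of $|B|$ are exactly $\{j:s\le j\le n-1-s,\ j\equiv s\pmod{2}\}$ when $n$ is odd and $\{j:s\le j\le n-1-s\}$ when $n$ is even; conditions (c) and (d) say precisely that $k$ lies in this set, so some choice makes $|B|=k$. With that $B$ (the empty graph if $B=\emptyset$), the two identities give $\deg(1)=k=d_{\pi(1)}$ and, by the case analysis on $\delta_i$ above, $\deg(i+1)-\deg(i)=\delta_i$ for all $i$; hence $\deg(i)=d_{\pi(i)}$ for all $i$ by induction, and so $\mathbf{d}_n$ is the degree sequence of $G$.

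The identities at the start are routine. The part requiring care is the counting in the sufficiency direction, where the parity of $n$ matters: for odd $n$ the pairs $\{i,n-i\}$ exhaust $[n-1]$ and the parity of $|B|$ is fixed modulo the split pairs, which is exactly why condition (d) is imposed, whereas for even $n$ the extra free slot at $n/2$ removes this constraint. The step I expect to need the most attention is verifying that the attainable range of $|B|$ meets the bounds of (c) exactly at both ends (in particular that the upper value $n-1-s$ is reached with the correct parity), since that is where (c) and (d) are used at full strength.
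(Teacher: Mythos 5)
Your proposal is correct and follows essentially the same route as the paper: translate everything into the parameter set $B(G)$ via Theorem~\ref{thm:ineqdeg} and Lemma~\ref{lem:deg}, read conditions (a)--(d) off the split versus non-split pairs $\{i,n-i\}$, and for sufficiency fill enough free pairs (plus the central singleton $n/2$ when $n$ is even) to make $|B|=d_{\pi(1)}$. The only cosmetic difference is that you obtain condition (d) by a direct parity count on the partition of $[n-1]$ into pairs, whereas the paper derives it from the evenness of $\deg\left(\frac{n+1}{2}\right)$ in Corollary~\ref{cor:deg1}(b) via a telescoping sum.
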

 \begin{proof}
 For notational convenience, we let $m=\left\lfloor (n-1)/2 \right\rfloor$.
  We first show the `only if' part. Suppose that ${\bf d}_n=(d_1, d_2, \ldots, d_n)$ is the degree sequence of a Toeplitz graph $G:=G_n\langle t_1, \ldots, t_k\rangle$.
 Let $\pi$ be the permutation on $[n]$ such that $d_{\pi(i)} = \deg(i)$ for each $i \in [n]$.
Then the conditions (a) and (b) immediately come from Corollary~\ref{cor:deg1} and Lemma~\ref{lem:deg}.
 Obviously, $d_{\pi(1)} =\deg(1) =k$. Now, for each  $i \in [m]$ such that $\deg(i+1) - \deg(i) \neq 0$, exactly one of $i$ or $n-i$ belongs to  $B(G)$ and the other belongs to $[n-1]\setminus B(G)$ by Lemma~\ref{lem:deg} (b) and (c).
  Therefore $s \le |B(G)| =k$ and $s \le |[n-1]\setminus  B(G)| = n-1-k$, so the sequence with the permutation $\pi$ satisfies the condition (c).
Let \[
B^+ = \{ i \in [m] \mid \deg(i+1)-\deg(i) = 1\}\] 
and\[B^- = \{ i \in [m] \mid \deg(i+1)-\deg(i) = -1\}.\]
 Then\[
 d_{\pi(m+1)} -d_{\pi(1)}= \left(d_{\pi(m+1)} - d_{\pi(m)}\right) + \cdots + \left(d_{\pi(2)} - d_{\pi(1)}\right)=|B^+|-|B^-|.\]
Since the parities of $s=|B^+|+|B^-|$ and $|B^+|-|B^-|$ are the same, $d_{\pi(m+1)}$ is odd and we reach a contradiction. Thus the sequence with the permutation $\pi$ satisfies the condition (d).

 Now we show the `if' part.
  Suppose that there exists a permutation $\pi$ on $[n]$ satisfying the conditions (a), (b), (c) and (d).
  For notational convenience, let $k=d_{\pi(1)}$.
 We will construct a Toeplitz graph $G:=G_n\langle t_1, \ldots, t_{k}\rangle$ such that  $\deg(i) = d_{\pi(i)}$ for each $i \in [n]$ as follows.

   Let \[B_1 = \{ i \mid d_{\pi(i+1)} - d_{\pi(i)} = 1, i \in [m] \},\] \[B_2 = \{ i \mid d_{\pi(i+1)} - d_{\pi(i)} = -1, i\in [m] \},\] and\[B_3 = \{i \mid d_{\pi(i+1)} - d_{\pi(i)} = 0, i \in [m]\}.\]  Then $|B_1|+|B_2|+|B_3| = m$ by condition (a) and they are mutually disjoint sets. By definition, $|B_1| + |B_2| =s$ and so $|B_3| = m -s$.
In addition, since $d_{\pi(1)} \le n-1-s$ by the condition (c), $d_{\pi(1)} - s \le n-1 -2s$.
Thus\[
 \left\lfloor \frac{d_{\pi(1)} - s}{2} \right\rfloor \le \left\lfloor \frac{n-1 -2s}{2} \right\rfloor \le |B_3|,\]
 and so we may choose $\left\lfloor(d_{\pi(1)}-s)/2 \right\rfloor$ elements from $B_3$. We denote by $B_3^*$  the set of those $\left\lfloor (d_{\pi(1)}-s)/2 \right\rfloor$ elements.
Now we define $\{t_1,\ldots,t_k\}$ by
   \[
  \begin{cases} B_1 \cup \{n-i\mid i\in B_2\} \cup B_3^* \cup \{n-i \mid i \in B_3^*\} & \mbox{if $d_{\pi(1)}-s$ is even;} \\
    B_1 \cup  \{n-i\mid i\in B_2\} \cup B_3^* \cup \{n-i \mid i \in B_3^*\} \cup\{\frac{n}{2}\} & \mbox{otherwise.}
   \end{cases}
   \]
  Then the set $\{t_1,\ldots,t_k\}$ is well-defined because $\frac{n}{2}$ is integer when $d_{\pi(1)} -s$ is odd,  by the condition (d), and it is straightforward to check that the set defined above has $k$ element.
  
  Let $G=G_n\langle t_1, \ldots, t_{k}\rangle$.
Then \begin{equation} \deg_G(1)=k=d_{\pi(1)}.\label{eq:d1}\end{equation}

Take $i \in [m]$. By the definition of $B(G)$, $i\in B_1$ if $i \in B(G)$ and $n-i \notin B(G)$;
 $i \in B_2$ if $i \notin B(G)$ and $n-i \in B(G)$; $i \in B_3^*$ if either $i \in B(G)$ and $n-i \in B(G)$ or $i \notin B(G)$ and $n-i \notin B(G)$.
Thus $\deg(i) = d_{\pi(i)}$ by Lemma~\ref{lem:deg}.  By the way, for each  $i \in [m]$, $d_{\pi(i)} = d_{\pi(n-i+1)}$  by the condition (b), so $\deg(n-i+1) = \deg(i) =d_{\pi(i)} = d_{\pi(n-i+1)}$.
Hence $d_{\pi(i)} = \deg(i)$ for each $i \in [n]$ and ${\bf d}_n$ is the degree sequence of $G$.\end{proof}

In the rest of this section, we will show that a necessary and sufficient condition for a Toeplitz graph being regular is its being a circulant graph. To do so, we need the following lemma.
 \begin{lemma}\label{lem:cir} Let $G=G_n\langle t_1, t_2,\ldots, t_k\rangle$. Then $G$ is a circulant graph if and only if $t_{k+1-i}=n-t_i$ for each $i \in [k]$.\end{lemma}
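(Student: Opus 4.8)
The plan is to show both directions of the equivalence by translating the statement ``$G_n\langle t_1,\dots,t_k\rangle$ is circulant'' into a condition on the set $B(G)=\{t_1,\dots,t_k\}$, namely that $B(G)$ is symmetric under the map $j\mapsto n-j$ on $[n-1]$. The key observation is that a $(0,1)$-symmetric circulant matrix $C_n$ with $c_0=0$ corresponds, when read off as a Toeplitz matrix, to a Toeplitz graph whose connection set $\{d: c_d=1\}$ satisfies $c_d = c_{n-d}$ by \eqref{eq:circulant}; moreover, since the graph is defined by the \emph{first row} of the adjacency matrix (positions $1+t_1,\dots,1+t_k$), the Toeplitz graph $G_n\langle t_1,\dots,t_k\rangle$ has adjacency matrix a circulant matrix precisely when the multiset of offsets $\{t_1,\dots,t_k\}$ is closed under $j\mapsto n-j$. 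Since the $t_i$ are listed in increasing order, closure under $j\mapsto n-j$ is exactly the statement $t_{k+1-i}=n-t_i$ for each $i\in[k]$: the largest offset $t_k$ pairs with the smallest $n-t_k$, which must then equal $t_1$, and so on inward.

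\medskip\noindent First I would make precise what it means for the graph $G_n\langle t_1,\dots,t_k\rangle$ to ``be a circulant graph.'' By the definition given in the introduction, this means $G$ is isomorphic to a Toeplitz graph whose adjacency matrix is a $(0,1)$-symmetric circulant matrix $C_n$ satisfying \eqref{eq:circulant}. The subtle point --- and what I expect to be the main obstacle --- is the word \emph{isomorphic}: a priori $G_n\langle t_1,\dots,t_k\rangle$ could be circulant via some nonobvious graph isomorphism even if its own adjacency matrix, as written, is not circulant. So the lemma is really asserting that for Toeplitz graphs this cannot happen in an essential way: $G_n\langle t_1,\dots,t_k\rangle$ is isomorphic to a circulant graph if and only if its \emph{standard} Toeplitz adjacency matrix is already circulant. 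I would handle this by arguing that if $G_n\langle t_1,\dots,t_k\rangle\cong G_n\langle s_1,\dots,s_k\rangle$ where the latter has a circulant adjacency matrix (so $s_{k+1-i}=n-s_i$), then one compares invariants forced by the Toeplitz structure --- for instance the degree sequence via Theorem~\ref{thm:ineqdeg} and Lemma~\ref{lem:deg}, together with the fact that vertex $1$ and vertex $n$ are the unique vertices of minimum degree $k$ in a non-circulant Toeplitz graph --- to pin down that the offset sets must coincide, hence $t_{k+1-i}=n-t_i$ as well. Alternatively, and perhaps more cleanly, I would invoke the known fact (from the circulant-graph literature cited, e.g.\ \cite{IB, Muz}) that a Toeplitz graph on $n$ vertices that is vertex-transitive must have its connection set symmetric, reducing to a routine check.

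\medskip\noindent For the ``if'' direction the argument is direct: assuming $t_{k+1-i}=n-t_i$ for all $i\in[k]$, I would write down the $n\times n$ matrix $C_n$ with $c_0=0$ and $c_{t_i}=c_{n-t_i}=1$ for each $i$ and $c_j=0$ otherwise, check that this is a well-defined $(0,1)$-symmetric circulant matrix satisfying \eqref{eq:circulant} (the symmetry $c_j=c_{n-j}$ holds by hypothesis, and the main diagonal is zero since $c_0=0$), and observe that reading off its first row recovers exactly the Toeplitz graph $G_n\langle t_1,\dots,t_k\rangle$ up to relabeling --- in fact the first row of $C_n$ has $1$'s in positions $1+t_1,\dots,1+t_k$ by construction. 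Hence $G$ is circulant. For the ``only if'' direction, after the isomorphism-rigidity step described above, one knows the standard adjacency matrix of $G$ is circulant, meaning the offset set $\{t_1,\dots,t_k\}$ equals the connection set $\{d: c_d=1\}$ of a circulant matrix satisfying $c_d=c_{n-d}$; reading off that $\{t_1,\dots,t_k\}$ is closed under $d\mapsto n-d$ and using the increasing order of the $t_i$ yields $t_{k+1-i}=n-t_i$ for each $i\in[k]$, completing the proof.
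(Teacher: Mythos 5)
Your ``if'' direction is correct and is essentially the paper's argument: from $t_{k+1-i}=n-t_i$ one checks directly that the standard Toeplitz adjacency matrix of $G$ is a circulant matrix satisfying \eqref{eq:circulant}. The divergence, and the problem, is in the ``only if'' direction. The paper's own proof simply reads \eqref{eq:circulant} off the first column of $G$'s adjacency matrix, i.e.\ it treats ``$G$ is a circulant graph'' as ``the standard adjacency matrix of $G_n\langle t_1,\ldots,t_k\rangle$ is circulant.'' You are right that the stated definition says ``isomorphic to,'' so there is in principle something to rule out; but your proposed repair does not work. The invariant you lean on --- that in a non-circulant Toeplitz graph the vertices $1$ and $n$ are the unique vertices of minimum degree $k$ --- is false. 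In $G_5\langle 4\rangle$ we have $\deg(1)=\deg(5)=1$ while $\deg(2)=\deg(3)=\deg(4)=0$, so vertex $1$ is not even of minimum degree; and in $G_6\langle 1,4\rangle$ (non-circulant, since $6-4=2\notin B(G)$) the vertices $1,3,4,6$ all have degree $k=2$, so uniqueness fails as well. Your fallback --- citing as a ``known fact'' that a vertex-transitive Toeplitz graph has symmetric connection set --- is essentially the assertion to be proved (it is the content of Theorem~\ref{thm16}), so it cannot be invoked as a routine check.

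The short, correct repair is available from the paper's own tools and is exactly the computation in the proof of Theorem~\ref{thm16}: a circulant graph is regular, hence if $G$ is isomorphic to one then $\deg(j)=\deg(j+1)$ for all $j\in[n-1]$; Lemma~\ref{lem:deg}(a) then forces $j\in B(G)\Leftrightarrow n-j\in B(G)$ for every $j\in[n-1]$, so $B(G)=\{n-t_k,\ldots,n-t_1\}$ and the increasing order of the $t_i$ gives $t_{k+1-i}=n-t_i$. This is not circular, because Theorem~\ref{thm16} only uses the ``if'' direction of the present lemma. So: keep your ``if'' direction as is, replace the isomorphism-rigidity argument by the regularity argument, and the proof closes.
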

 \begin{proof} Let $(a_{ij})_{1 \le i,j \le n}$ be the adjacency matrix of $G$.
 To show the `only if' part, suppose that $G$ is a circulant graph.

 Take $i \in [n-1]$. Suppose that $i \in B(G)$ (recall $B(G) = \{t_1,\ldots,t_k\}$).  Then $a_{i+1 , 1} =1$ by definition.  By \eqref{eq:circulant}, $a_{n-i+1, 1}=a_{i+1,1}=1$. Therefore $n-i \in B(G)$.
 Thus \[
 B(G) = \{n-t_k,\ldots, n-t_1\}.\]
  Since $t_1<t_2<\cdots<t_k$, $t_{k+1-j} = n-t_j$ for each $j \in [k]$.

  Now we show the `if' part. Suppose that $t_{k+1-i}=n-t_i$ for each $i \in [n-1]$.
  Then, for each $s \in [n-1]$, $a_{s+1,1} = 1$ if and only if $s \in B(G)$ if and only if $n-s \in B(G)$ if and only if $a_{n-s+1,1} = 1$. Thus $(a_{ij})_{1 \le i,j \le n}$ is a circulant matrix.
 \end{proof}

Let $G$ be a circulant graph isomorphic to $G_n\langle t_1, \ldots, t_k\rangle$.
 It is well-known that if $t_i \neq n/2$ for each $i \in [k]$, then $k$ is even and $G$ is a regular graph; if $n$ is even and $t_j= n/2$ for some $j \in [k]$, then $k$ is odd and $G$ is a regular graph (see \cite{Mei}). Therefore any circulant graph is a regular Toeplitz graph. In the following, we show that its converse is also true.

 \begin{theorem} \label{thm16} A Toeplitz graph is regular if and only if it is a circulant graph.
\end{theorem}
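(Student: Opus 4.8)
The plan is to prove the nontrivial direction: every regular Toeplitz graph is a circulant graph. Since the converse is already established in the paragraph preceding the statement (via \cite{Mei}), it suffices to assume $G = G_n\langle t_1, \ldots, t_k\rangle$ is $r$-regular and show $t_{k+1-i} = n - t_i$ for each $i \in [k]$, which by Lemma~\ref{lem:cir} gives that $G$ is circulant. So I would reduce everything to a statement about $B(G) = \{t_1, \ldots, t_k\}$: I want to show $B(G)$ is symmetric under $j \mapsto n - j$ on $[n-1]$, i.e. $j \in B(G) \iff n - j \in B(G)$.

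First I would invoke Theorem~\ref{thm:ineqdeg}: $\deg(i) = \ell_G(i) + \ell_G(n-i+1)$ for each $i \in [n]$. Since $G$ is regular, $\deg(i) = \deg(i+1)$ for every $i \in [n-1]$, so Lemma~\ref{lem:deg}(a) applies to every $j \in [n-1]$: for each such $j$ we have either $\{j, n-j\} \subseteq B(G)$ or $\{j, n-j\} \subseteq [n-1]\setminus B(G)$. (One should note $n - j$ ranges over $[n-1]$ as well, and handle the middle index $j = n/2$ when $n$ is even — there $n - j = j$ and the condition is vacuous but still consistent with symmetry.) In either case, $j \in B(G)$ forces $n - j \in B(G)$, and $j \notin B(G)$ forces $n - j \notin B(G)$. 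Hence $B(G) = \{n - t_1, \ldots, n - t_k\}$ as a set. Since $t_1 < t_2 < \cdots < t_k$ implies $n - t_1 > n - t_2 > \cdots > n - t_k$, matching the two increasingly-ordered lists term by term yields $t_{k+1-i} = n - t_i$ for each $i \in [k]$. By Lemma~\ref{lem:cir}, $G$ is a circulant graph.

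The only genuinely delicate point — and where I expect a careful reader to want detail — is making sure Lemma~\ref{lem:deg}(a) is legitimately available for \emph{all} $j \in [n-1]$ rather than just $j \le \lfloor(n-1)/2\rfloor$; this is fine because regularity gives $\deg(j) = \deg(j+1)$ with no restriction on $j$, and Lemma~\ref{lem:deg} is stated for every $j \in [n-1]$. A second minor subtlety is the boundary/parity bookkeeping around $n/2$: if $n$ is even and $n/2 \in B(G)$, symmetry $j \mapsto n-j$ fixes $n/2$, so it does not threaten the term-by-term matching; if $n$ is odd there is no such index. I would spell out these two observations briefly and then conclude. The argument is short and essentially a direct consequence of the degree machinery already developed in this section together with Lemma~\ref{lem:cir}.
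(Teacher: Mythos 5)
Your proposal is correct and follows essentially the same route as the paper's proof: regularity gives $\deg(j)=\deg(j+1)$, Lemma~\ref{lem:deg}(a) then yields the symmetry $B(G)=\{n-t_k,\ldots,n-t_1\}$, and Lemma~\ref{lem:cir} finishes the argument. Your extra care about applying Lemma~\ref{lem:deg}(a) to all $j\in[n-1]$ and about the index $n/2$ is sound but does not change the substance of the argument.
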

 \begin{proof} By the argument above, it is sufficient to show the `only if' part.
  Let $G=G_n\langle t_1, \ldots, t_k\rangle$ be a regular Toeplitz graph. Then  $\deg(i)=\deg(i+1)$ for each $i \in \{1,\ldots, \left\lceil n/2 \right\rceil-1\}$.
  Now, by Lemma~\ref{lem:deg}(a), either $\{i, n-i\} \subset B(G)$ or $\{i,n-i\} \subset [n-1]\backslash  B(G)$  for each $i \in [n-1]$. Since $B(G)\ne \emptyset$, \[
  B(G) = \{t_1,\ldots,t_k\} = \{n-t_k,\ldots,n- t_1\}.\]
 Thus $t_{k+1-i}=n-t_i$ for each $i \in [k]$ and so, by Lemma~\ref{lem:cir}, $G$ is a circulant graph.
 \end{proof}

 \section{Closing remarks}
 \label{sec:open}
 Theorem~\ref{thm:chordequiv} gives equivalent conditions for a Toeplitz graph $G_n\langle t_1, t_2,\ldots, t_k\rangle$ with $t_1<\cdots<t_k$ and $n \ge t_{k-1}+t_{k}$ being chordal.
 In the proof of Theorem~\ref{thm:chordequiv}, we did not use the condition $n \ge t_{k-1}+t_{k}$ when we showed $\mbox{(iii)}\Leftrightarrow\mbox{(iv)}$, $\mbox{(iii)}\Rightarrow \mbox{(i)}$, and $\mbox{(i)}\Rightarrow \mbox{(ii)}$.

 \smallskip\noindent
  {\bf Question 1.} Does the statement (ii) imply the statement (i) in Theorem~\ref{thm:chordequiv} without the condition $n \ge t_{k-1}+t_{k}$?

 \smallskip
 Theorem~\ref{theorem:perfectgraph} gives equivalent conditions for a Toeplitz graph $G_n\langle t_1, t_2\rangle$ being perfect for any positive integers $n$ and $t_1 < t_2$.

 \smallskip\noindent
{\bf Question 2.} Is there any meaningful characterization for (weakly) perfect Toeplitz graphs $G_n\langle t_1, t_2,\ldots, t_k\rangle$ with $k \ge 3$?

\section*{Acknowledgements}
The authors thank anonymous referees for his/her helpful comments to improve the paper (in particular, the proof of Lemma~\ref{lem:k2cycle} and Section~\ref{sec:degree}).
This work was supported by the National Research Foundation of Korea(NRF) grant funded by the Korea government(MSIP) (2016R1A5A1008055, NRF-2017R1E1A1A03070489, 2021R1C1C2014187), the Ministry of Education (NRF-2016R1A6A3A11930452).

\end{document}